\newtheorem{thm}{Theorem}[section]
\newtheorem{cor}[thm]{Corollary}
\newtheorem{lem}[thm]{Lemma}
\newtheorem{prop}[thm]{Proposition}
\theoremstyle{definition}
\theoremstyle{property}
\theoremstyle{remark}
\newtheorem{rem}[thm]{Remark}
\numberwithin{equation}{section}
\definecolor{ceruleanblue}{rgb}{0.16, 0.32, 0.75}
\begin{document}

\title[Leray-Hirsch theorem and blow-up formula for Dolbeault cohomology]{Leray-Hirsch theorem and blow-up formula for Dolbeault cohomology}

\author{Lingxu Meng}
\address{Department of Mathematics, North University of China, Taiyuan, Shanxi 030051,  P. R. China}
\email{menglingxu@nuc.edu.cn}%
\date{\today}

\subjclass[2010]{Primary 32J25; Secondary 14F25, 32C35}
\keywords{gluing principle; Leray-Hirsch theorem; blow-up; Dolbeault cohomology;  strongly $q$-complete manifold; $\partial\bar{\partial}$-lemma; flag bundle; product manifold}


\begin{abstract}
  We prove  a theorem of Leray-Hirsch type  and give an explicit blow-up formula for Dolbeault cohomology on (\emph{not necessarily compact}) complex manifolds. We give applications to  strongly $q$-complete manifolds and the $\partial\bar{\partial}$-lemma.
\end{abstract}

\maketitle


\section{Introduction}
Dolbeault cohomology is an algebraic invariant on complex manifolds, which is related to the complex structures of complex manifolds. In general cases, the computation of Dolbeault cohomology is much more difficult than that of de Rham one. One reason is that the property of a sheaf germs of holomorphic forms is more complicated than that of a constant sheaf. We focus on  the Dolbeault cohomologies of fiber bundles and blow-ups.

The Leray-Hirsch theorem is a classical result in algebraic topology, which is used to calculate the singular (or de Rham) cohomology of fiber bundles.
Cordero, L. \cite{CFGU} obtained a version of this theorem on Dolbeault cohomology by use of Borel's spectral sequence on holomorphic fiber bundle.
By Cordero's result, Rao, S., Yang, S. and Yang, X.-D. \cite{RYY} proved  the projective bundle formula for Dolbeault cohomology on  compact complex manifolds. Angella, D., Suwa, T., Tardini, N. and  Tomassini, A. \cite{ASTT} and Stelzig, J. \cite{St2}  got the same formula for projective bundles in  different ways.
The following  theorem is weaker than Cordero's and we will give a simple proof.
\begin{thm}\label{L-H}
Let $\pi:E\rightarrow X$ be a holomorphic fiber bundle with compact fibers over a connected complex manifold $X$. Assume that there exist classes $e_1$, $\dots$, $e_r$ of pure degrees in $ H_{\bar{\partial}}^{*,*}(E)$, such that,  their restrictions $e_1|_{E_x}$, $\dots$, $e_r|_{E_x}$ to $E_x$ is a basis of  $H_{\bar{\partial}}^{*,*}(E_x)$ for every $x\in X$, where $H^{*,*}_{\bar{\partial}}(\bullet)=\bigoplus\limits_{p,q\geq 0}H^{p,q}_{\bar{\partial}}(\bullet)$.  Then
\begin{displaymath}
\sum_{i=1}^r\pi^*(\bullet)\cup e_i:\bigoplus\limits_{i=1}^rH_{\bar{\partial}}^{p-u_i,q-v_i}(X) \rightarrow H_{\bar{\partial}}^{p,q}(E)
\end{displaymath}
is an isomorphism for any $p$, $q$, where $e_i$ is of degree $(u_i,v_i)$ for $1\leq i\leq r$.
\end{thm}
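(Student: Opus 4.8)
The plan is to regard both sides of the asserted isomorphism as contravariant functors on the open subsets of $X$, related by a natural transformation, and to reduce the global statement to a purely local one on a suitable basis of opens, thereby avoiding Borel's spectral sequence. For each open $U\subseteq X$ write $E_U=\pi^{-1}(U)$ and set
\[
\Phi_U\colon\bigoplus_{i=1}^r H^{p-u_i,\,q-v_i}_{\bar\partial}(U)\longrightarrow H^{p,q}_{\bar\partial}(E_U),\qquad \Phi_U(\alpha_1,\dots,\alpha_r)=\sum_{i=1}^r\pi^*\alpha_i\cup\bigl(e_i|_{E_U}\bigr).
\]
Both the domain and the codomain carry Mayer--Vietoris long exact sequences: for the domain this is the ordinary Dolbeault Mayer--Vietoris sequence on $X$, obtained from the short exact sequence of Dolbeault complexes associated with a partition of unity; for the codomain it is the same sequence for the cover $\{E_U,E_V\}$ of $E_{U\cup V}$. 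Since pullback and cup product with a fixed global class both commute with restriction maps and with the connecting homomorphisms, the maps $\Phi_U$ assemble into a morphism of Mayer--Vietoris sequences. It therefore suffices to prove that $\Phi_U$ is an isomorphism over a basis of opens of $X$ that is adequate for the gluing argument.

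For the local model I would take $U$ biholomorphic to a polydisk contained in a trivializing chart, so that $E_U\cong U\times F$ with $F$ the typical (compact) fiber. Because $U$ is Stein, one has $H^{a,b}_{\bar\partial}(U)=0$ for $b>0$ while $H^{a,0}_{\bar\partial}(U)$ is the space of holomorphic $a$-forms on $U$; combined with the finite-dimensionality of $H^{*,*}_{\bar\partial}(F)$ coming from compactness of $F$, the K\"unneth formula for Dolbeault cohomology gives
\[
H^{p,q}_{\bar\partial}(U\times F)\cong\bigoplus_{a+c=p} H^{a,0}_{\bar\partial}(U)\otimes H^{c,q}_{\bar\partial}(F).
\]
By hypothesis the restrictions $e_1|_{E_x},\dots,e_r|_{E_x}$ form a basis of $H^{*,*}_{\bar\partial}(F)\cong H^{*,*}_{\bar\partial}(E_x)$, compatibly with the product structure; hence the classes $\pi^*\alpha\cup e_i$, with $\alpha$ ranging over a basis of $H^{*,0}_{\bar\partial}(U)$, are exactly the images under this K\"unneth isomorphism of the standard basis on the right-hand side. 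Thus $\Phi_U$ is an isomorphism over every such $U$.

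It remains to globalize, which is the crux. Applying the five lemma to the two Mayer--Vietoris sequences shows that if $\Phi$ is an isomorphism over $U$, over $V$ and over $U\cap V$, then it is an isomorphism over $U\cup V$; induction over a finite cover then yields the statement for finite unions of basis opens, and a direct-limit argument along an exhaustion of $X$ (Dolbeault cohomology being compatible with increasing unions) upgrades this to all of $X$. The main obstacle I anticipate is organizing this patching cleanly: one must produce a basis of Stein opens, each contained in a trivializing chart and stable under the finite intersections entering the induction, so that the local computation applies not only to the members of the cover but to their intersections, and one must verify carefully that cup product with the global classes $e_i$ commutes with the connecting homomorphisms so that the five-lemma step is valid. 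I would isolate this combinatorial content as a stand-alone \emph{gluing principle}: a natural transformation between two functors equipped with compatible Mayer--Vietoris sequences which is an isomorphism on such a basis is an isomorphism on $X$. Granting this principle, Theorem~\ref{L-H} follows by feeding in the local K\"unneth computation above.
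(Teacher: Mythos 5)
Your overall strategy --- a natural transformation between Mayer--Vietoris sequences, an isomorphism on a basis of trivializing opens via the K\"unneth formula, and a stand-alone gluing principle to globalize --- is exactly the paper's (Lemma \ref{glued} is precisely the gluing principle you ask for, and the five-lemma and local K\"unneth steps appear essentially verbatim). Two of the points you flag as needing care are handled in the paper as follows. The commutation of your map with the connecting homomorphisms is obtained for free by working at the level of forms: one fixes $\bar{\partial}$-closed representatives $t_i$ of the $e_i$ and defines a chain map $\sum_i\pi^*(\bullet)\wedge t_i$ from $\bigoplus_i\mathcal{A}^{p-u_i,\bullet-v_i}(U)$ to $\mathcal{A}^{p,\bullet}(E_U)$, so the entire ladder is induced by a morphism of short exact sequences of complexes. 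In the local step one must also address the fact that $e_i|_{E_U}$ need not correspond to the pullback of the fiber class under a trivialization $E_U\cong U\times F$; the paper compares the two via the auxiliary classes $e_i'=(\varphi_U^{-1})^*pr_2^*i_o^*e_i$, and your assertion that the $\pi^*\alpha\cup e_i$ ``are exactly the images of the standard basis'' under K\"unneth needs the same comparison. Note also that Steinness of $U$ is not needed: the K\"unneth isomorphism (\ref{Kun}) holds for arbitrary $U$ once the fiber is compact, so trivializing opens suffice and their finite intersections cause no trouble.

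The genuine gap is your final globalization step. You pass from finite unions of basis opens to $X$ by ``a direct-limit argument along an exhaustion of $X$, Dolbeault cohomology being compatible with increasing unions.'' It is not: for an increasing union $U_1\subseteq U_2\subseteq\cdots$ the restriction maps go $H^{p,q}_{\bar{\partial}}(U_{n+1})\rightarrow H^{p,q}_{\bar{\partial}}(U_n)$, so the relevant comparison is with an inverse limit, and Dolbeault cohomology does not commute with inverse limits of opens in general (there are nontrivial $\varprojlim^1$ phenomena, and $\bar{\partial}$-exactness on the union is strictly stronger than $\bar{\partial}$-exactness on each $U_n$). This is exactly the difficulty Lemma \ref{glued} is designed to avoid: instead of exhausting, one invokes \cite[p.~16, Proposition II]{GHV} to write $X=V_1\cup\cdots\cup V_l$ as a \emph{finite} union in which each $V_i$, and each intersection of the $V_i$, is a countable \emph{disjoint} union of finite intersections of basis opens. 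The disjoint-union condition is harmless for Dolbeault cohomology because forms on a disjoint union constitute a product and cohomology commutes with products, and then finitely many applications of the five lemma finish the proof. Replacing your exhaustion step by this covering argument (or by quoting Lemma \ref{glued} directly) repairs the proof; the rest of your outline matches the paper's.
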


Rao, S. et. al. \cite{RYY} proved that, there exists an isomorphism for Dolbeault cohomology of blow-ups of compact complex manifolds by the cohomology of sheaves and expressed it explicitly for the one in the Fujiki class $\mathcal{C}$. They also provided some beautiful applications to bimeromorphic geometry. Shortly after \cite{RYY} was posted on arXiv, Angella, D. et. al. \cite{ASTT} obtained the similar result under the assumption that the center admits a holomorphically contractible neighborhood by \u{C}ech-Dolbeault cohomology and considered the orbifold cases under some restrictions.  Stelzig, J. \cite{St2} studied the blow-up formula on the level of  the double complex for  compact complex manifolds up to a suitable notation of quasi-isomorphism. We give an explicit expression of the blow-up formula  and prove that it holds on general (\emph{not necessarily compact}) complex manifolds for the Dolbeault cohomology.

\begin{thm}\label{main}
Let $\pi:\widetilde{X}_Z\rightarrow X$ be the blow-up of a connected complex manifold $X$ along a connected complex submanifold $Z$ of codimension $r$. Let $E$ be the exceptional divisor and $i_E:E\rightarrow\widetilde{X}_Z$ the inclusion. Then
\begin{equation}\label{b-u-m}
\pi^*+\sum_{i=1}^{r-1}i_{E*}\circ (h^{i-1}\cup)\circ (\pi|_E)^*
\end{equation}
gives an isomorphism
\begin{displaymath}
H_{\bar{\partial}}^{p,q}(X)\oplus \bigoplus_{i=1}^{r-1}H_{\bar{\partial}}^{p-i,q-i}(Z)\tilde{\rightarrow} H_{\bar{\partial}}^{p,q}(\widetilde{X}_Z)
\end{displaymath}
for any $p$, $q$, where $\pi|_E:E\rightarrow Z$ is naturally viewed as the projective bundle $E=\mathbb{P}(N_{Z/X})$ associated to the normal bundle $N_{Z/X}$ of $Z$ in $X$ and $h\in H_{\bar{\partial}}^{1,1}(E)$ is the Dolbeault class of a first Chern form of the universal line bundle  $\mathcal{O}_E(-1)$ on $E$.
\end{thm}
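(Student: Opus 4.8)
The plan is to prove the isomorphism by a local-to-global argument: I would reduce the global statement to a model computation over coordinate charts adapted to $Z$, and then propagate it by Mayer--Vietoris. The key preliminary point is naturality of the map \eqref{b-u-m}. The pullbacks $\pi^*$ and $(\pi|_E)^*$ are the usual operations on Dolbeault cohomology, cupping with $h\in H^{1,1}_{\bar\partial}(E)$ is well defined since $h$ is a genuine Dolbeault class, and $i_{E*}$ is the Gysin map of the closed embedding of the divisor $E$, raising bidegree by $(1,1)$. What I really need to record is that each of the four families of operations is natural with respect to restriction to $\pi$-saturated opens $\pi^{-1}(U)$ with $U\subseteq X$, so that the whole of \eqref{b-u-m} commutes with the restriction morphisms, and hence with the connecting maps of Mayer--Vietoris sequences.

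Next I would set up the gluing. Cover $X$ by opens of two types: those $U$ with $U\cap Z=\emptyset$, and adapted charts $U\cong U'\times B$, where $B\subset\mathbb{C}^r$ is a ball and $Z\cap U=U'\times\{0\}$ with $U'$ open in $Z$. Since Dolbeault cohomology is the sheaf cohomology $H^q(\,\cdot\,,\Omega^p)$, both sides of \eqref{b-u-m} carry Mayer--Vietoris sequences as $U$ varies, and by the previous paragraph the map is a morphism of these two systems. Finite intersections of charts of the two types are again covered by charts of the same two types, so by the five lemma (the gluing principle, applied inductively over the cover) it suffices to prove that \eqref{b-u-m} is an isomorphism over each chart of the two types.

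For $U$ disjoint from $Z$ the restriction of $\pi$ is a biholomorphism and $E$ does not meet $\pi^{-1}(U)$, so the pushforward terms vanish and the map reduces to the isomorphism $\pi^*$. For an adapted chart the blow-up is a product $\pi^{-1}(U)\cong U'\times\widetilde{B}_0$, where $\widetilde{B}_0=\Bl_0 B$ is the blow-up of the ball at the origin, with exceptional locus $U'\times\mathbb{P}^{r-1}$ and $h$ restricting to the hyperplane class. Here I would use a Künneth argument for products to strip off the Stein factor $U'$ (this is where the ``product manifold'' input enters, controlling the infinite-dimensional Dolbeault groups of $U'$), reducing to the model $\widetilde{B}_0\to B$ with center a point. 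For the model I would apply Theorem~\ref{L-H}: the classes $1,h,\dots,h^{r-1}$ restrict to a basis on each fiber of the exceptional $\mathbb{P}^{r-1}$, while $B$ is Stein and contractible so that $H^{*,q}_{\bar\partial}(B)=0$ for $q>0$; a direct comparison then shows that \eqref{b-u-m} realizes exactly the decomposition $H^{p,q}_{\bar\partial}(\widetilde{B}_0)\cong H^{p,q}_{\bar\partial}(B)\oplus\bigoplus_{i=1}^{r-1}H^{p-i,q-i}_{\bar\partial}(\mathrm{pt})$, the new classes sitting in bidegrees $(i,i)$ and carried by $h^i$.

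I expect the main obstacle to be twofold, and concentrated in the first and last steps. First, one must verify that the Gysin map $i_{E*}$ and the cup products with powers of $h$ commute with all restriction and Mayer--Vietoris connecting maps, so that the five-lemma argument is legitimate; pushforwards and cup products do not localize automatically, and this compatibility is the technical heart. Second, one must carry out the model blow-up computation compatibly with Künneth when the normal directions are non-compact, and show that the composite $i_{E*}\circ(h^{i-1}\cup)\circ(\pi|_E)^*$ hits precisely the ``new'' summand $h^i$, matching it with the projective-bundle summand of $H^{*,*}_{\bar\partial}(E)$ supplied by Theorem~\ref{L-H}. It is this bookkeeping, rather than any single hard estimate, on which the whole identification rests.
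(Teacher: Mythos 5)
Your outer framework---naturality of \eqref{b-u-m} under restriction to $\pi$-saturated opens, then Mayer--Vietoris and the gluing principle of Lemma \ref{glued}---is exactly the paper's; the paper secures the naturality by fixing a global Chern form $t$ of $\mathcal{O}_E(-1)$ and running the Mayer--Vietoris comparison at the cochain level, with forms on $U$ mapping to \emph{currents} on $\widetilde{U}$ so that $i_{E*}$ exists before passing to cohomology. But your local step has genuine gaps. First, the K\"unneth reduction for $\pi^{-1}(U)\cong U'\times\widetilde{B}_0$ is not available: the K\"unneth isomorphism \eqref{Kun} requires one factor to be compact, and both $U'$ and $\widetilde{B}_0=\Bl_0 B$ are noncompact; for two noncompact factors the statement needs completed topological tensor products and is not what you may quote. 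Moreover, Theorem \ref{L-H} applies to the fiber bundle $E\to Z$ (giving Corollary \ref{poj-D}), not to $\widetilde{B}_0\to B$, which is not a fiber bundle, so it does not compute $H^{p,q}_{\bar{\partial}}(\widetilde{B}_0)$. Relatedly, your basis of adapted product charts is not closed under finite intersections, whereas Lemma \ref{glued} demands the statement on finite intersections of basis elements; ``covered by charts of the same type'' does not suffice. The paper sidesteps all of this by taking Stein opens (whose finite intersections are Stein, with Stein trace on $Z$) and proving the local case for an arbitrary Stein center directly (Proposition \ref{special}), using the relative sheaves $\mathcal{F}^p_{X,Z}$ and Lemma \ref{2} to obtain the exact rows \eqref{commutative1} identifying $\mathrm{coker}\,\pi^*$ with $\mathrm{coker}\,(\pi|_E)^*$.

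Second, and more seriously, your ``direct comparison'' in the model is missing its key ingredient: to show that the \emph{specific} map \eqref{b-u-m} is an isomorphism (rather than merely that the two sides have matching dimensions, which are infinite here anyway), you must control the composite $i_E^*\circ i_{E*}$, i.e.\ prove the self-intersection formula $i_E^*i_{E*}\sigma=h\cup\sigma$ of Lemma \ref{key}. That identity is what yields injectivity (pull a relation back by $i_E^*$ and apply Corollary \ref{poj-D}, then Wells' injectivity of $\pi^*$) and surjectivity (subtract the pushforward terms from a given class and use \eqref{commutative1} to land in $\pi^*H^{p,q}_{\bar{\partial}}(X)$). Its proof is not formal: one writes the integration current $l_{E*}(1)$ as $\tfrac{i}{2\pi}\Theta(\mathcal{O}_{\widetilde{U}}(E))+\bar{\partial}T$, which uses $H^{0,1}_{\bar{\partial}}(\widetilde{U})=0$ over a Stein neighborhood $U$ of $Z$, together with a holomorphic retraction $\tau:U\to Z$ to extend classes from $E$ to $\widetilde{U}$ before applying the projection formula. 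This is the step on which the whole identification rests, and your proposal does not supply it.
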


This theorem is well-known for compact K\"{a}hler manifolds  by the blow-up formula for singular cohomology (\cite[Theorem 7.31]{Vo}) and Hodge decompositions.
With the same reasons, it also holds on the ones in the Fujiki class $\mathcal{C}$ (cf. \cite{RYY}).
Our theorem is applicable to noncompact  complex manifolds and the ones without Hodge decompositions.
The isomorphism of the type (\ref{b-u-m}) also exists in various homology theories, for instance, Chow group \cite{Ma,Vo}, singular cohomology \cite{GH,L,Vo}, Lawson homology \cite{H,N}, Deligne cohomology \cite{B-V,Gri} and Morse-Novikov cohomology \cite{YZ,M1}.

A gluing principle is given in Section 2 and is used  to prove Theorem \ref{L-H} in Section 3.
In Section 4, we first prove a self-intersection formula on Dolbeault cohomology for a special case and then prove Theorem \ref{main} by  the gluing principle. At last, the two theorems are applied to study strongly $q$-complete manifolds and the $\partial\bar{\partial}$-lemma, see Section 5.

\begin{rem}
After finishing the early version \cite{M2} of the present paper,  we made some applications in subsequent works \cite{M3,M4,M5,M6}. It is worth to notice that, Rao, S. et al. \cite{RYY2} extended the blow-up formula to bundle-valued Dolbeault cohomology on compact complex manifold and we  removed the assumption of the compactness in their results by Theorem \ref{main} and the gluing principle (the following Lemma \ref{glued}) in \cite{M4}.
\end{rem}

\subsection*{Acknowledgements}
 I would like to express my gratitude to Prof. Fu, Jixiang  for his encouragement and support and thank School of Mathematical Science of Fudan University for the hospitality during my visit. Especially, I would like to thank Prof. Rao, Sheng  and Dr. Yang, Xiang-Dong  for many useful discussions and sending me the newest versions of their articles \cite{RYY,RYY2,YY}. I would like to thank the referee for valuable suggestions and
careful reading of my manuscript.

\section{A gluing principle}
A classical technique in algebraic topology  is to  first consider the local cases and then extend them to the global cases by Mayer-Vietoris sequences. We refine it as a general principle and call it the \emph{gluing principle}.
\begin{lem}[{\cite[Lemma 3.7]{M1}}]\label{glued}
Let $X$ be a smooth manifold and denote by $\mathcal{P}(U)$ a statement on  any open subset $U\subseteq X$. Assume that $\mathcal{P}$ satisfies  conditions:

$(i)$ \emph{local condition}\emph{:} There exists a basis $\mathcal{U}$ of the topology of $X$, such that $\mathcal{P}(\bigcap\limits_{i=1}^lU_i)$ holds for any finite $U_1$, $\ldots$, $U_l$ $\in$ $\mathcal{U}$.

$(ii)$ \emph{disjoint condition}\emph{:} Let $\{U_n|n\in\mathbb{N}^+\}$ be a collection of disjoint open subsets of $X$. If $\mathcal{P}(U_n)$ hold for all $n\in\mathbb{N}^+$, $\mathcal{P}(\bigcup\limits_{n=1}^\infty U_n)$ holds.

$(iii)$ \emph{Mayer-Vietoris condition}\emph{:} For open subsets $U$, $V$ of $X$, if $\mathcal{P}(U)$, $\mathcal{P}(V)$ and $\mathcal{P}(U\cap V)$ hold, then $\mathcal{P}(U\cup V)$ holds.\\
Then $\mathcal{P}(X)$ holds.
\end{lem}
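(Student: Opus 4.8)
The plan is to run the classical Mayer--Vietoris induction in the style of Bott--Tu, upgrading the validity of $\mathcal{P}$ in three stages: from basis elements to finite unions of basis elements, and then from those to an arbitrary open set via an exhaustion argument, the case $U=X$ being the conclusion. At the outset I would make a harmless reduction: replacing $\mathcal{U}$ by the collection $\mathcal{U}'$ of all finite intersections of its members, I may assume the chosen basis is closed under finite intersections. Indeed $\mathcal{U}'$ is again a basis of the topology, and condition $(i)$ guarantees that $\mathcal{P}$ holds on every member of $\mathcal{U}'$, hence a fortiori on every finite intersection of members of $\mathcal{U}'$. This closure is exactly what makes the later intersection analysis go through cleanly.

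Next I would prove, by induction on $n$, that $\mathcal{P}(U_1\cup\cdots\cup U_n)$ holds for all $U_1,\ldots,U_n\in\mathcal{U}$. The base case $n=1$ is condition $(i)$. For the inductive step, set $A=U_1\cup\cdots\cup U_n$ and $B=U_{n+1}$; then $A\cap B=\bigcup_{j=1}^{n}(U_j\cap U_{n+1})$ is a union of $n$ members of $\mathcal{U}$ by the reduction above, so $\mathcal{P}(A)$, $\mathcal{P}(B)$, and $\mathcal{P}(A\cap B)$ all hold by the inductive hypothesis together with $(i)$, and the Mayer--Vietoris condition $(iii)$ delivers $\mathcal{P}(A\cup B)$.

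The essential step is to pass from finite unions of basis elements to an arbitrary open set $U\subseteq X$. Here I would use that $U$, being an open subset of a smooth (hence second countable) manifold, admits a proper smooth exhaustion function $f\colon U\to[0,\infty)$. For each integer $n\ge 0$ the slab $f^{-1}([n,n+1])$ is compact; I cover it by finitely many members of $\mathcal{U}$ each contained in $f^{-1}\big((n-\tfrac12,n+\tfrac32)\big)$ and let $W_n$ be their union, so $\mathcal{P}(W_n)$ holds by the previous step while $W_n\subseteq f^{-1}\big((n-\tfrac12,n+\tfrac32)\big)$. Grouping by parity, I set $U_{\mathrm{ev}}=\bigcup_{k\ge 0}W_{2k}$ and $U_{\mathrm{odd}}=\bigcup_{k\ge 0}W_{2k+1}$; the enclosing intervals are pairwise disjoint within each parity class, so the disjoint condition $(ii)$ yields $\mathcal{P}(U_{\mathrm{ev}})$ and $\mathcal{P}(U_{\mathrm{odd}})$. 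The overlaps occur only between adjacent indices, so $U_{\mathrm{ev}}\cap U_{\mathrm{odd}}$ is the disjoint union of the pieces $W_{2k}\cap W_{2k\pm1}$, each of which lies in its own half-integer slab and is a finite union of members of $\mathcal{U}$ (an intersection of two finite unions of basis elements), hence satisfies $\mathcal{P}$ by the second step. A further application of $(ii)$ gives $\mathcal{P}(U_{\mathrm{ev}}\cap U_{\mathrm{odd}})$, and since $U=U_{\mathrm{ev}}\cup U_{\mathrm{odd}}$ a final use of $(iii)$ proves $\mathcal{P}(U)$.

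I expect the bookkeeping in this last step to be the main obstacle. One must choose the covering basis elements subordinate enough to the slabs that the even and odd families are genuinely disjoint, and then verify carefully that $U_{\mathrm{ev}}\cap U_{\mathrm{odd}}$ really decomposes as a \emph{disjoint} union of finite unions of basis elements, so that both $(ii)$ and the second step apply to it. The remainder of the argument is a formal combination of conditions $(i)$--$(iii)$, and the reduction to an intersection-closed basis is the technical device that keeps each stage elementary.
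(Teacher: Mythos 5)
Your argument is correct, and its skeleton matches the paper's: first establish $\mathcal{P}$ on finite unions of basis elements by Mayer--Vietoris induction (your intersection-closure reduction plays the role of the paper's auxiliary statement $(\textbf{**})$), then reach the whole space through countable disjoint unions and one more gluing. Where you diverge is in how the global decomposition is produced. The paper simply cites \cite[p.~16, Proposition II]{GHV} to write $X=V_1\cup\cdots\cup V_l$ with each $V_i$ a countable disjoint union of finite unions of basis sets, and because it must glue $l$ pieces at the end it needs the stronger inductive statement $(\textbf{*})$ (Mayer--Vietoris for unions of $r$ arbitrary open sets, given $\mathcal{P}$ on all their finite intersections). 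You instead construct the decomposition by hand from a proper exhaustion function, and your parity trick reduces the final gluing to a single application of condition $(iii)$ on the two sets $U_{\mathrm{ev}}$ and $U_{\mathrm{odd}}$; the bookkeeping you flag (that $W_n\cap W_{n+1}$ sits in the half-integer slab $f^{-1}\bigl((n+\tfrac12,n+\tfrac32)\bigr)$, so the pieces of $U_{\mathrm{ev}}\cap U_{\mathrm{odd}}$ are genuinely pairwise disjoint and each is a finite union of basis elements) does check out. Your version buys self-containedness and proves the statement for every open subset $U\subseteq X$, not just $U=X$; the paper's buys brevity by outsourcing the covering argument. One shared loose end, not worth much: both proofs apply the disjoint condition to collections that may be finite, which tacitly requires $\mathcal{P}(\emptyset)$ (available from the local condition as soon as $\mathcal{U}$ contains two disjoint members, but strictly speaking an edge case).
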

\begin{proof}
We first prove the following statement:

$(\textbf{*})$ For open subsets  $U_1$, $\ldots$, $U_r$ of $X$, if $\mathcal{P}(\bigcap\limits_{j=1}^kU_{i_j})$ holds for any $1\leq i_1<\ldots<i_k\leq r$, then $\mathcal{P}(\bigcup\limits_{i=1}^r U_i)$ holds.

For $r=1$, it holds obviously. Assume that $(\textbf{*})$ holds for $r$. For $r+1$, set $U'_1=U_1$, $\ldots$, $U'_{r-1}=U_{r-1}$, $U'_r=U_r\cup U_{r+1}$. Then $\mathcal{P}(\bigcap\limits_{j=1}^kU'_{i_j})$ holds for any $1\leq i_1<\ldots<i_k\leq r-1$. Moreover, by the Mayer-Vietoris condition, $\mathcal{P}(\bigcap\limits_{j=1}^{k-1}U'_{i_j}\cap U'_r)$ also holds for any $1\leq i_1<\ldots<i_{k-1}\leq r-1$, since $\mathcal{P}(\bigcap\limits_{j=1}^{k-1}U_{i_j}\cap U_r)$, $\mathcal{P}(\bigcap\limits_{j=1}^{k-1}U_{i_j}\cap U_{r+1})$ and  $\mathcal{P}(\bigcap\limits_{j=1}^{k-1}U_{i_j}\cap U_r\cap U_{r+1})$ all hold. By the inductive hypothesis, $\mathcal{P}(\bigcup\limits_{i=1}^{r+1} U_i)=\mathcal{P}(\bigcup\limits_{i=1}^{r} U'_i)$ holds. We proved $(\textbf{*})$.

Let $\mathcal{U}_\mathfrak{f}$ be the collection of open sets  which are the finite unions of open sets in  $\mathcal{U}$. We have a claim as follows:

$(\textbf{**})$ $\mathcal{P}(V)$ holds for any finite intersection $V$ of open sets in $\mathcal{U}_\mathfrak{f}$.

Assume that $V=\bigcap\limits_{i=1}^s U_i$, where $U_i=\bigcup\limits_{j=1}^{r_i}U_{ij}$ and $U_{ij}\in\mathcal{U}$. Then $V=\bigcup\limits_{J\in\Lambda}U_J$, where $\Lambda=\{J=(j_1,\ldots,j_s)|1\leq j_1\leq r_1,\ldots,1\leq j_s\leq r_s\}$ and $U_J=U_{1j_1}\cap...\cap U_{sj_s}$. For any $J_1$, $\ldots$, $J_t\in\Lambda$, $\mathcal{P}(U_{J_1}\cap \ldots\cap U_{J_t})$ holds by the local condition. Hence $\mathcal{P}(V)=\mathcal{P}(\bigcup\limits_{J\in\Lambda}U_J)$ holds by $(\textbf{*})$.

By \cite[p. 16, Proposition II]{GHV},  $X=V_1\cup\ldots\cup V_l$, where $V_i$ is a countable disjoint union of open sets in  $\mathcal{U}_\mathfrak{f}$. Obviously, for any $1\leq i_1<\ldots<i_k\leq l$, $\bigcap\limits_{j=1}^kV_{i_j}$ is a countable disjoint union of  finite intersections of open sets in $\mathcal{U}_\mathfrak{f}$. By the disjoint condition and $(\textbf{**})$,  $\mathcal{P}(\bigcap\limits_{j=1}^kV_{i_j})$ holds. So $\mathcal{P}(X)$ holds by $(\textbf{*})$.
\end{proof}

\section{A proof of Theorem \ref{L-H}}
For a complex manifold $X$, denote by $\mathcal{A}_X^{p,q}$ (resp. $\mathcal{D}_X^{\prime p,q}$, $\Omega_X^p$) the sheaf of germs of smooth $(p,q)$-forms (resp. $(p,q)$-currents, holomorphic $p$-forms) on $X$ and denote by $\mathcal{A}^{p,q}(X)$ (resp. $\mathcal{A}_c^{p,q}(X)$, $\mathcal{D}^{\prime p,q}(X)$) the space of smooth $(p,q)$-forms (resp. smooth $(p,q)$-forms with compact supports, $(p,q)$-currents) on $X$.

For bigraded vector spaces $K^{*,*}$ and $L^{*,*}$ over $\mathbb{C}$, the associated bigraded space $K^{*,*}\otimes_{\mathbb{C}}L^{*,*}$ over $\mathbb{C}$ is defined as
\begin{displaymath}
(K^{*,*}\otimes_{\mathbb{C}}L^{*,*})^{p,q}=\bigoplus\limits_{\substack{k+l=p\\r+s=q}}K^{k,r}\otimes_{\mathbb{C}} L^{l,s}
\end{displaymath}
for any $p$, $q$.

For coherent analytic sheaves $\mathcal{F}$ and $\mathcal{G}$  on complex manifolds $X$ and $Y$ respectively,  the (\emph{analytic}) \emph{external tensor product} of $\mathcal{F}$ and $\mathcal{G}$ is defined as
\begin{displaymath}
\mathcal{F}\boxtimes\mathcal{G}=pr_1^*\mathcal{F}\otimes_{\mathcal{O}_{X\times Y}}pr_2^*\mathcal{G},
\end{displaymath}
where $pr_1$ and $pr_2$ are projections from $X\times Y$ onto $X$, $Y$, respectively.  Clearly, $\Omega_{X\times Y}^p=\bigoplus\limits_{r+s=p}\Omega_{X}^r\boxtimes\Omega_{Y}^s$. Assume that $Y$  is \emph{compact}. By \cite[IX, (5.23) (5.24)]{D}, we have an isomorphism
\begin{equation}\label{Kun}
pr_1^*(\bullet)\cup pr_2^*(\bullet):H_{\bar{\partial}}^{*,*}(X)\otimes_{\mathbb{C}} H_{\bar{\partial}}^{*,*}(Y)\tilde{\rightarrow} H_{\bar{\partial}}^{*,*}(X\times Y)
\end{equation}
for any $p$, $q$.
It is the \emph{K\"{u}nneth formula} for the Dolbeault cohomology.

Combining the K\"{u}nneth formula and the gluing principle, we give a proof of Theorem \ref{L-H}.
\begin{proof}
Let $t_i$ be a $\bar{\partial}$-closed form of degree $(u_i,v_i)$ in $\mathcal{A}^{*,*}(E)$, such that $e_i=[t_i]$ for $1\leq i\leq r$. For any open set $U$ in $X$, set
\begin{displaymath}
B^{p,q}(U)=\bigoplus_{i=1}^{r}\mathcal{A}^{p-u_i,q-v_i}(U)
\end{displaymath}
and $\bar{\partial}_B=\bigoplus\limits_{i=1}^r\bar{\partial}: B^{p,q}(U)\rightarrow B^{p,q+1}(U)$. For any $p$, $(B^{p,\bullet}(U),\bar{\partial}_B)$ is a complex, whose cohomology is
\begin{displaymath}
D^{p,q}(U)=\bigoplus_{i=1}^{r}H_{\bar{\partial}}^{p-u_i,q-v_i}(U).
\end{displaymath}
Clearly, the morphism
\begin{displaymath}
\psi^{p}_U=\sum\limits_{i=1}^r\pi^*(\bullet)\wedge t_i:(B^{p,\bullet}(U),\bar{\partial}_B)\rightarrow (C^{p,\bullet}(U),\bar{\partial}_C):=(\mathcal{A}^{p,\bullet}(E_U),\bar{\partial})
\end{displaymath}
of complexes induces a morphism
\begin{displaymath}
\Psi^{p,q}_U=\sum\limits_{i=1}^r\pi^*(\bullet)\cup e_i:D^{p,q}(U)\rightarrow E^{p,q}(U):=H_{\bar{\partial}}^{p,q}(E_U).
\end{displaymath}
Denoted by $\mathcal{P}(U)$ the statement that $\Psi^{p,q}_U$ are isomorphisms for all $p$, $q$.  Our goal is to prove that $\mathcal{P}(X)$ holds by the gluing principle. Clearly, $\mathcal{P}$  satisfies the disjoint condition.

Given $p$, for any open subsets $U$, $V$ in $X$, there is a commutative diagram
\begin{displaymath}
\xymatrix{
0\ar[r]&B^{p,\bullet}(U\cup V)\ar[d]^{\psi^{p}_{U\cup V}}\quad \ar[r]^{(\rho_U^{U\cup V},\rho_V^{U\cup V})\quad\quad} & \quad B^{p,\bullet}(U)\oplus B^{p,\bullet}(V)\ar[d]^{(\psi^{p}_U,\psi^{p}_V)}\quad\ar[r]^{\quad\quad\rho_{U\cap V}^U-\rho_{U\cap V}^V}& \quad B^{p,\bullet}(U\cap V) \ar[d]^{\psi^{p}_{U\cap V}}\ar[r]& 0\\
0\ar[r]&C^{p,\bullet}(U\cup V)     \quad  \ar[r]^{(j_U^{U\cup V},j_V^{U\cup V})\quad\quad}& \quad C^{p,\bullet}(U)\oplus C^{p,\bullet}(V)    \quad\ar[r]^{\quad\quad j_{U\cap V}^U-j_{U\cap V}^ V} & \quad C^{p,\bullet}(U\cap V)     \ar[r]& 0}
\end{displaymath}
of  complexes, where $\rho$, $j$ are restrictions and the differentials of complexes in the top, bottom rows are all $\bar{\partial}_B$, $\bar{\partial}_C$, respectively. The two rows are both exact sequences of complexes. Therefore, we have a commutative diagram
\begin{displaymath}
\tiny{\xymatrix{
    \cdots\ar[r]&D^{p,q-1}(U\cap V)\ar[d]^{\Psi^{p,q-1}_{U\cap V}}\ar[r]&D^{p,q}(U\cup V) \ar[d]^{\Psi^{p,q}_{U\cup V}} \ar[r]& D^{p,q}(U)\oplus D^{p,q}(V) \ar[d]^{(\Psi^{p,q}_U,\Psi^{p,q}_V)}\ar[r]&  D^{p,q}(U\cap V)\ar[d]^{\Psi^{p,q}_{U\cap V}}\ar[r]&\cdots\\
 \cdots\ar[r]&E^{p,q-1}(U\cap V)    \ar[r] & E^{p,q}(U\cup V)\ar[r]&E^{p,q}(U)\oplus E^{p,q}(V)       \ar[r]& E^{p,q}(U\cup V)   \ar[r]&\cdots}}
\end{displaymath}
of long exact sequences. If $\Psi^{p,q}_U$, $\Psi^{p,q}_V$ and $\Psi^{p,q}_{U\cap V}$ are isomorphisms for all $p$, $q$, then so are $\Psi^{p,q}_{U\cup V}$ for all $p$, $q$ by the five-lemma. Hence $\mathcal{P}$  satisfies the Mayer-Vietoris condition.

To check the local condition, we first verify the following claim:

$(\lozenge)$ If the open set $U$ of $X$ satisfies that $E_U=\pi^{-1}(U)$ is holomorphically trivial, then $\mathcal{P}(U)$ holds.

Suppose that $F$ is the general fiber of $E$  and $\varphi_U: U\times F\rightarrow E_U$ is a holomorphic trivialization. Let $pr_1$ and $pr_2$ be projections from $U\times F$ to $U$ and $F$ respectively, which satisfy $\pi\circ \varphi_U=pr_1$.
Given a point $o\in U$, set $j_o:F\rightarrow U\times F$ as $f\mapsto (o,f)$.
Clearly, $pr_2\circ j_o=id_F$ and $i_o:=\varphi_U\circ j_o$ is the embedding $F\hookrightarrow E_U$ of the fiber $E_o\cong F$ over $o$ into $E_U$.
Set $e_i^{\prime}=(\varphi_U^{-1})^*pr_2^*i_o^*e_i\in H^{u_i,v_i}_{\bar{\partial}}(E_U)$, $1\leq i\leq r$.
Then $i_o^*e_i^{\prime}=i_o^*e_i$ for any $i$.
Since $i_o^*e_1$, $\ldots$, $i_o^*e_r$ is linearly independent,  mapping $e_i$ to $e'_i$ for $1\leq i\leq r$ give an isomorphism $\textrm{span}_{\mathbb{C}}\{e_1, \ldots, e_r\}\tilde{\rightarrow}\textrm{span}_{\mathbb{C}}\{e_1^{\prime}, \ldots, e_r^{\prime}\}$.
For any $p$, $q$, we have a commutative diagram

\begin{equation}\label{com}
\xymatrix{
 (H_{\bar{\partial}}^{*,*}(U)\otimes_{\mathbb{C}} \textrm{span}_{\mathbb{C}}\{e_1, \ldots, e_r\})^{p,q}\ar[d]^{\cong}  \ar[dr]^{id\otimes i_o^*}&\\
 (H_{\bar{\partial}}^{*,*}(U)\otimes_{\mathbb{C}} \textrm{span}_{\mathbb{C}}\{e_1^{\prime}, \ldots, e_r^{\prime}\})^{p,q}\ar[d]^{\pi^*(\bullet)\cup \bullet} \ar[r]^{\qquad id\otimes i_o^*}& (H_{\bar{\partial}}^{*,*}(U)\otimes_{\mathbb{C}} H_{\bar{\partial}}^{*,*}(F))^{p,q} \ar[d]^{pr_1^*(\bullet)\cup pr_2^*(\bullet)}\\
 H_{\bar{\partial}}^{p,q}(E_U) \ar[r]^{\varphi_U^*}&  H_{\bar{\partial}}^{p,q}(U\times F), }
\end{equation}
where $\textrm{span}_{\mathbb{C}}\{e_1, \ldots, e_r\}$ and $\textrm{span}_{\mathbb{C}}\{e_1^{\prime}, \ldots, e_r^{\prime}\}$ are viewed as bigraded subspaces of $H^{*,*}_{\bar{\partial}}(E_U)$.
By the assumption, the restriction of $i_o^*$ to $\textrm{span}_{\mathbb{C}}\{e_1, \ldots, e_r\}$, hence to $\textrm{span}_{\mathbb{C}}\{e_1^{\prime}, \ldots, e_r^{\prime}\}$, is an isomorphism. Notice that $F$ is compact. By the K\"{u}nneth formula (\ref{Kun}), $pr_1^*(\bullet)\cup pr_2^*(\bullet)$ is isomorphic,  so is $\pi^*(\bullet)\cup \bullet$ in (\ref{com}).
Mapping $(\alpha_1,\ldots,\alpha_r)$ to $\sum\limits_{i=1}^r\alpha_i\otimes e_i$ gives a morphism
\begin{equation}\label{id}
\sum\limits_{i=1}^r\bullet\otimes e_i:\bigoplus\limits_{i=1}^{r}H_{\bar{\partial}}^{p-u_i,q-v_i}(U)\rightarrow (H_{\bar{\partial}}^{*,*}(U)\otimes_{\mathbb{C}} \textrm{span}_{\mathbb{C}}\{e_1, \ldots, e_r\})^{p,q},
\end{equation}
which is clearly isomorphic. Then $\Psi^{p,q}_U$ is the composition of (\ref{id}) and the  two vertical maps in the first column of  (\ref{com}), hence an isomorphism.
We proved $(\lozenge)$.
Let $\mathcal{U}$ be a basis of the topology of $X$ such that $E_U$ is holomorphically trivial for any $U\in\mathcal{U}$. For $U_1$, $\ldots$, $U_l\in\mathcal{U}$,  $E_{U_1\cap\ldots\cap U_l}$ is holomorphically trivial, then $(\lozenge)$ asserts that $\mathcal{P}(\bigcap\limits_{i=1}^lU_i)$ is an isomorphism. Hence $\mathcal{P}$ satisfies the local condition.

We complete the proof.
\end{proof}
\begin{rem}
Cordero, L. et al. \cite[Lemma 18]{CFGU} first give a Leray-Hirsch theorem by a spectral sequence introduced by Borel, A..
On their theorem, we want to point the follows:

$(1)$ They required that the Dolbeault cohomology algebra $H_{\bar{\partial}}^{*,*}(F)$ of the general fiber $F$ need to have a  transgressive algebraic basis.
In fact, they only used the existence of a bigraded vector space basis consisting of trangressive elements (for details, see \cite[Lemma 18]{CFGU}).
This assumption  is equivalent to say that, for any $\bar{\partial}$-closed form $a\in \mathcal{A}^{*,*}(F)$, there exist $\tilde{a}\in \mathcal{A}^{*,*}(E)$ and a $\bar{\partial}$-closed form $b\in \mathcal{A}^{*,*}(X)$ such that $j^*\tilde{a}=a$ and $\bar{\partial}\tilde{a}=\pi^*b$, where $j:F\rightarrow E$ be the inclusion of the general fiber.

$(2)$ Their assumption  is \emph{weaker} than ours in Theorem 1.1.
This point was also mentioned in \cite[Page 8]{RYY2}.
Actually,  for any $\bar{\partial}$-closed form $a\in \mathcal{A}^{*,*}(F)$, our assumptions imply that  
there exist a $\bar{\partial}$-closed form $\tilde{a}_1\in \mathcal{A}^{*,*}(E)$ and $c\in \mathcal{A}^{*,*}(F)$ such that $j^*\tilde{a}_1=a+\bar{\partial}c$.
Since $j^*:\mathcal{A}^{*,*}(E)\rightarrow \mathcal{A}^{*,*}(F)$ is surjective, $j^*\tilde{c}=c$ for some $\tilde{c}\in \mathcal{A}^{*,*}(E)$.
Set $\tilde{a}=\tilde{a}_1-\bar{\partial}\tilde{c}$.
Then $j^*\tilde{a}=a$ and $\bar{\partial}\tilde{a}=0$.
\end{rem}

\begin{cor}[{\cite{RYY,ASTT,St2}}]\label{poj-D}
Suppose that $\pi:\mathbb{P}(E)\rightarrow X$ is the projectivization of a holomorphic vector bundle $E$ on a connected complex manifold $X$. Let $h\in H_{\bar{\partial}}^{1,1}({\mathbb{P}(E)})$ be the Dolbeault class of a first Chern form of the universal line bundle $\mathcal{O}_{\mathbb{P}(E)}(-1)$ on ${\mathbb{P}(E)}$. Then
\begin{displaymath}
\sum\limits_{i=0}^{r-1}\pi^*(\bullet)\cup h^i:\bigoplus\limits_{i=0}^{r-1}H_{\bar{\partial}}^{p-i,q-i}(X)\rightarrow H_{\bar{\partial}}^{p,q}(\mathbb{P}(E)),
\end{displaymath}
is an isomorphism for any $p$, $q$,
where $\emph{rank}_{\mathbb{C}}E=r$.
\end{cor}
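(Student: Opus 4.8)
The plan is to derive Corollary \ref{poj-D} as a direct specialization of Theorem \ref{L-H}. The projectivization $\pi:\mathbb{P}(E)\to X$ is a holomorphic fiber bundle over the connected manifold $X$, and its fibers are all isomorphic to the complex projective space $\mathbb{P}^{r-1}$, which is compact; so the standing hypotheses of Theorem \ref{L-H} (holomorphic fiber bundle with compact fibers over a connected base) are satisfied. It thus suffices to produce classes $e_1,\dots,e_r$ of pure degree in $H^{*,*}_{\bar\partial}(\mathbb{P}(E))$ whose restrictions to each fiber form a basis of $H^{*,*}_{\bar\partial}$ of that fiber, and to identify the resulting isomorphism with the stated one.

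First I would take the classes to be $e_{i}=h^{i-1}$ for $1\le i\le r$, where $h\in H^{1,1}_{\bar\partial}(\mathbb{P}(E))$ is the Dolbeault class of a first Chern form of $\mathcal{O}_{\mathbb{P}(E)}(-1)$. These have pure bidegree $(i-1,i-1)$, so in the notation of Theorem \ref{L-H} we have $(u_i,v_i)=(i-1,i-1)$. The key computational input is the Dolbeault cohomology of the fiber $\mathbb{P}^{r-1}$: one has $H^{p,q}_{\bar\partial}(\mathbb{P}^{r-1})=\mathbb{C}$ for $0\le p=q\le r-1$ and $0$ otherwise, with the diagonal classes generated by the powers $\xi^{0},\xi^{1},\dots,\xi^{r-1}$ of the hyperplane (Fubini--Study) class $\xi\in H^{1,1}_{\bar\partial}(\mathbb{P}^{r-1})$. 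The essential geometric fact is that the restriction $\mathcal{O}_{\mathbb{P}(E)}(-1)|_{\mathbb{P}(E_x)}$ is the tautological line bundle $\mathcal{O}_{\mathbb{P}^{r-1}}(-1)$ on the fiber, so that $h|_{\mathbb{P}(E_x)}$ is (up to the usual sign) a nonzero multiple of $\xi$. Consequently $e_i|_{\mathbb{P}(E_x)}=(h|_{\mathbb{P}(E_x)})^{i-1}$ is a nonzero multiple of $\xi^{i-1}$, and $\{e_1|_{\mathbb{P}(E_x)},\dots,e_r|_{\mathbb{P}(E_x)}\}=\{1,\xi,\dots,\xi^{r-1}\}$ (up to nonzero scalars) is precisely a basis of $H^{*,*}_{\bar\partial}(\mathbb{P}^{r-1})$, for every $x\in X$.

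With these classes in hand, Theorem \ref{L-H} applies verbatim and yields that
\begin{displaymath}
\sum_{i=1}^{r}\pi^*(\bullet)\cup e_i:\bigoplus_{i=1}^{r}H^{p-(i-1),q-(i-1)}_{\bar\partial}(X)\to H^{p,q}_{\bar\partial}(\mathbb{P}(E))
\end{displaymath}
is an isomorphism for all $p,q$. Reindexing $i\mapsto i+1$ (so $i$ runs from $0$ to $r-1$) and substituting $e_{i+1}=h^{i}$ rewrites this as the map $\sum_{i=0}^{r-1}\pi^*(\bullet)\cup h^{i}$ on $\bigoplus_{i=0}^{r-1}H^{p-i,q-i}_{\bar\partial}(X)$, which is exactly the assertion of the corollary.

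I expect no serious obstacle: the only nontrivial point is verifying that the powers of $h$ restrict to a fiber basis, which reduces to the two standard facts recalled above, namely the Dolbeault cohomology ring of projective space and the identification of $\mathcal{O}_{\mathbb{P}(E)}(-1)$ on a fiber with the tautological bundle of that projective fiber. Both are classical, so the proof is essentially a bookkeeping verification that the hypotheses of Theorem \ref{L-H} hold for this specific choice of classes, followed by the trivial reindexing.
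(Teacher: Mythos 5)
Your proposal is correct and follows exactly the paper's own argument: take $e_i=h^{i-1}$, note that the restrictions $1,h,\dots,h^{r-1}$ to each fiber $\mathbb{P}(E_x)\cong\mathbb{P}^{r-1}$ form a basis of its Dolbeault cohomology, and apply Theorem \ref{L-H}. You merely spell out in more detail the standard facts (the Dolbeault ring of $\mathbb{P}^{r-1}$ and the restriction of $\mathcal{O}_{\mathbb{P}(E)}(-1)$ to a fiber) that the paper leaves implicit.
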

\begin{proof}
For every $x\in X$, $1$, $h$, $\ldots$, $h^{r-1}$ restricted to the fibre $\pi^{-1}(x)=\mathbb{P}(E_x)$ freely linearly generate $H_{\bar{\partial}}^{*,*}(\mathbb{P}(E_x))$. By Theorem \ref{L-H}, we immediately get the corollary.
\end{proof}
\begin{rem}
With the similar proof, we can get the flag bundle formula for Dolbeault cohomology, see the proof of Corollary \ref{flagbun}.
\end{rem}

\begin{rem}\label{Hopf}
There exists holomorphic fiber bundles which do not satisfy the condition in Theorem \ref{L-H}.
For example, let $W=\mathbb{C}^2-\{(0,0)\}$ and $G$ the infinite cyclic group which is generated by the automorphism $(z_1,z_2)\mapsto (a_1z_1,a_2z_2)$ with $0< |a_1|$, $|a_2|<1$ and $a_1^k=a_2^l$ for some $k$, $l\in\mathbb{Z}$.
Set $\alpha=(a_1,a_2)$ and denote by $H_\alpha$ the \emph{Hopf surface} $W/G$.
Then $H_\alpha$ is an elliptic fiber bundle over $\mathbb{C}P^1$ (\cite[V, Proposition (18.2)]{BHPV}).
The $(1,0)$-Hodge numbers of  $H_\alpha$ and any fiber of $H_\alpha$ are $0$ (\cite[V, Proposition (18.1)]{BHPV}) and $1$ respectively, so the fiber bundle $H_\alpha$ over $\mathbb{C}P^1$ does not satisfy the condition in Theorem \ref{L-H}.
\end{rem}

\section{A proof of Theorem \ref{main}}
Now, we recall two sheaves defined  in the third version of  \cite{RYY} and study their properties by a little different approach. Following the ideas in \cite{RYY}, we fill some details of their proof with results in \cite{Gro,GN,St2}.

Let $X$ be a complex manifold and $i:Z\rightarrow X$ the inclusion of a complex submanifold $Z$ into $X$. For any $p$, $q$, the pullpacks give an epimorphism $\mathcal{A}_X^{p,q}\rightarrow i_*\mathcal{A}_Z^{p,q}$ of sheaves, which can be checked locally (or see \cite[Lemma 3.9]{RYY}). Denote
\begin{displaymath}
\mathcal{G}_{X,Z}^{p,q}=\textrm{ker}(\mathcal{A}_X^{p,q}\rightarrow i_*\mathcal{A}_Z^{p,q})
\end{displaymath}
and by $\mathcal{C}^{\infty}_X$ the sheaf of germs of complex-valued smooth functions on $X$.  Clearly, $\mathcal{G}_{X,Z}^{p,q}$ is a sheaf of $\mathcal{C}^{\infty}_X$-module, hence a soft sheaf. For any $p$, we have a short exact sequence
\begin{equation}\label{short0}
0\rightarrow(\mathcal{G}_{X,Z}^{p,\bullet},\bar{\partial})\rightarrow(\mathcal{A}_X^{p,\bullet},\bar{\partial})\rightarrow (i_*\mathcal{A}_Z^{p,\bullet},\bar{\partial})\rightarrow 0
\end{equation}
of complexes of sheaves, which induces a long exact sequence
\begin{equation}\label{long}
\xymatrix{
    \cdots\ar[r]&\mathcal{H}^{q-1}(i_*\mathcal{A}_Z^{p,\bullet},\bar{\partial})\ar[r]&\mathcal{H}^{q}(\mathcal{G}_{X,Z}^{p,\bullet},\bar{\partial})  \ar[r]& \mathcal{H}^{q}(\mathcal{A}_X^{p,\bullet},\bar{\partial}) \ar[r]&  \mathcal{H}^{q}(i_*\mathcal{A}_Z^{p,\bullet},\bar{\partial})\ar[r]&\cdots }
\end{equation}
of sheaves, where $\mathcal{H}^q(\mathcal{R}^\bullet)$ is the $q$-th cohomology sheaf of the complex $\mathcal{R}^\bullet$ of sheaves. As we know, $\mathcal{H}^{q}(\mathcal{A}_X^{p,\bullet},\bar{\partial})$ is $\Omega_X^p$ at $q=0$ and $0$ at $q>0$. Define
\begin{displaymath}
\mathcal{F}_{X,Z}^p=\textrm{ker}(\bar{\partial}:\mathcal{G}_{X,Z}^{p,0}\rightarrow\mathcal{G}_{X,Z}^{p,1}).
\end{displaymath}
Since $i_*$ is an exact functor, by $($\ref{long}$)$,
\begin{displaymath}
0\rightarrow\mathcal{F}_{X,Z}^p\rightarrow\Omega_X^p\rightarrow i_*\Omega_Z^p\rightarrow \mathcal{H}^1(\mathcal{G}^{p,\bullet},\bar{\partial})\rightarrow 0
\end{displaymath}
is exact and $\mathcal{H}^q(\mathcal{G}^{p,\bullet},\bar{\partial})=0$ for $q\geq2$. It is easily checked that the pullbacks induce an epimorphism $\Omega_X^p\rightarrow i_*\Omega_Z^p$. So
\begin{equation}\label{short}
0\rightarrow\mathcal{F}_{X,Z}^p\rightarrow\Omega_X^p\rightarrow i_*\Omega_Z^p\rightarrow 0
\end{equation}
is an exact sequence of sheaves and
\begin{displaymath}
\xymatrix{
0\ar[r] &\mathcal{F}_{X,Z}^p\ar[r]^{i} &\mathcal{G}_{X,Z}^{p,0}\ar[r]^{\bar{\partial}} &\mathcal{G}_{X,Z}^{p,1}\ar[r]^{\bar{\partial}}&\cdots\ar[r]^{\bar{\partial}}&\mathcal{G}_{X,Z}^{p,n}\ar[r]&0
}
\end{displaymath}
is a resolution of soft sheaves of $\mathcal{F}_{X,Z}^p$, for any $p$.

Now, let $\pi:\widetilde{X}_Z\rightarrow X$ be the blow-up of a connected (not necessary compact) complex manifold $X$ along a connected complex submanifold $Z$. We know $\pi|_E:E=\pi^{-1}(Z)\rightarrow Z$ is the projectivization $E=\mathbb{P}(N_{Z/X})$ of the normal bundle $N_{Z/X}$ of $Z$ in $X$. Let $i_Z:Z\rightarrow X$
and $i_E:E\rightarrow \widetilde{X}_Z$ be inclusions.

The following lemma was first proved for smooth schemes over a field in \cite{Gro, GN}. Stelzig, J. indicated that it also holds for complex manifolds and the proof can be copied verbatim. Recently, Rao, S. et al. (\cite{RYY2}) gave all details of the proof of this lemma on compact bases in the holomorphic category by the same steps in  \cite{GN} and moreover, they actually proved it for the bundle-valued case. Following their steps (\cite{GN, RYY2}), we can easily extend it to possibly noncompact manifolds with minor modifications.

\begin{lem}\label{1}
For any $p\geq0$, the following morphisms are isomorphisms:

$(1)$ $\pi^*:\Omega_X^p\tilde{\rightarrow}\pi_*\Omega_{\widetilde{X}_Z}^p$,

$(2)$  $(\pi|_E)^*:\Omega_Z^p\tilde{\rightarrow}(\pi|_E)_*\Omega_{E}^p$,

$(3)$ $i_E^*:R^q\pi_*\Omega_{\widetilde{X}_Z}^p\tilde{\rightarrow}i_{Z*}R^q(\pi|_E)_*\Omega_{E}^p$, for $q\geq1$.
\end{lem}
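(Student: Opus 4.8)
The plan is to prove all three isomorphisms locally, then patch. The statements are local on $X$ (and on $Z$ for part (2)), since $\pi^*$, $(\pi|_E)^*$ and $i_E^*$ are morphisms of sheaves and being an isomorphism of sheaves can be checked stalkwise, equivalently on a basis of small open sets. So I would fix $x\in X$ and choose a coordinate polydisc $U\subseteq X$ small enough that either $U\cap Z=\emptyset$ or $Z\cap U$ is a coordinate slice $\{z_{r+1}=\cdots=z_n=0\}$, with $N_{Z/X}|_{Z\cap U}$ trivial. Away from $Z$ the map $\pi$ is a biholomorphism, so all three claims are trivial there (and $E$ is empty, making (2) and (3) vacuous). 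Hence the whole content is the \emph{local model} $\widetilde{X}_Z|_U=\Bl_{0}(\Delta^r)\times \Delta^{n-r}$, where $\Bl_0(\Delta^r)\to\Delta^r$ is the blow-up of a polydisc at the origin with exceptional divisor $E_0\cong\mathbb{P}^{r-1}$.

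For the local model I would argue as follows. Part (1): $\pi$ is proper with connected fibers and $\widetilde{X}_Z$ is smooth, so the natural map $\Omega_X^p\to\pi_*\Omega^p_{\widetilde{X}_Z}$ is injective, and for surjectivity a holomorphic $p$-form on $\Bl_0(\Delta^r)\setminus E_0=\Delta^r\setminus\{0\}$ extends across $\{0\}$ by Hartogs/Riemann extension when $r\geq 2$ (the case $r=1$ being an isomorphism already), giving descent. Part (2) is the analogous statement for the projective bundle $\pi|_E=\mathbb{P}(N_{Z/X})\to Z$: a relative holomorphic $p$-form along $\mathbb{P}^{r-1}$-fibers pushes forward isomorphically because $H^0(\mathbb{P}^{r-1},\Omega^p_{\mathbb{P}^{r-1}})=0$ for $p\geq 1$ and $=\mathbb{C}$ for $p=0$, so $(\pi|_E)_*\Omega^p_E$ recovers exactly $\Omega^p_Z$. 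Part (3) is the heart: I would compute $R^q\pi_*\Omega^p_{\widetilde{X}_Z}$ for the local model and match it with $R^q(\pi|_E)_*\Omega^p_E$ via restriction to $E$. Using the Leray spectral sequence or the known cohomology of the tautological bundle $\mathcal{O}_E(-1)$ together with the description of $\Omega^p_{\widetilde{X}_Z}$ near $E$, the higher direct images $R^q\pi_*$ (for $q\geq 1$) are supported on $Z$ and are built from $H^q$ of the fibers $\mathbb{P}^{r-1}$; the restriction map $i_E^*$ identifies these with the corresponding higher direct images on $E$, since the fiber cohomologies coincide.

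I expect part (3) to be the main obstacle. The difficulty is twofold: first, computing $R^q\pi_*\Omega^p_{\widetilde{X}_Z}$ requires understanding $\Omega^p_{\widetilde{X}_Z}$ in a neighborhood of $E$, which involves the blow-up's cotangent sheaf and the twisting by $\mathcal{O}(E)|_E=\mathcal{O}_E(-1)$; second, one must verify that $i_E^*$ (not merely some abstract isomorphism) is the map realizing the identification, which forces me to track the restriction maps carefully through the fiber-cohomology computation. Since the paper explicitly defers to the verbatim arguments of \cite{GN, RYY2}, I would follow their filtration of $\Omega^p_{\widetilde{X}_Z}$ by order of vanishing along $E$ and the associated graded pieces, reducing each graded piece to a twist $\Omega^a_Z\boxtimes\Omega^b_{\mathbb{P}^{r-1}}(k)$ whose higher direct images are classical. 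Once the local model is settled, the soft-resolution and base-change compatibilities make the global patching routine, so the proof reduces to these explicit local fiber-cohomology computations for the projective space $\mathbb{P}^{r-1}$.
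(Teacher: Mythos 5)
Your outline is correct, but it takes a genuinely different route from the paper's on the crucial part (3). Both proofs exploit that all three statements are local on $X$, but you localize all the way down to the coordinate local model $\Bl_0(\Delta^r)\times\Delta^{n-r}$ and propose to finish by explicit fiber-cohomology computations (the filtration of $\Omega^p_{\widetilde{X}_Z}$ by order of vanishing along $E$, Bott's formula on $\mathbb{P}^{r-1}$, and a Künneth/box-product decomposition over the Stein factors), whereas the paper localizes only as far as a cover of $X$ by open sets $U$ with \emph{compact closure}: on each $\overline{U}$ it invokes the Grauert--Remmert relative vanishing theorem for the projective morphism $\pi$ to produce an integer $m_U$ with $R^q\pi_*(\Omega^p_{\widetilde{X}_Z}\otimes\mathcal{O}_{\widetilde{X}_Z}(l))=0$ for $q\geq1$, $l\geq m_U$, which is exactly the input needed to rerun the Gros/Guill\'en--Navarro/Rao--Yang--Yang argument verbatim over $U$; the point of the paper's proof is precisely that relative compactness restores the uniformity in $l$ that fails on a noncompact base. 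Your route buys self-containedness (no Grauert--Remmert, only classical cohomology of $\mathbb{P}^{r-1}$ and Cartan B on polydiscs) at the cost of actually carrying out the local-model computation of $R^q\pi_*\Omega^p_{\Bl_0(\Delta^r)}$ and, as you rightly flag, of verifying that the identification is induced by $i_E^*$ and not merely abstract; the paper's route buys brevity by reusing the published compact-base argument wholesale. For parts (1) and (2) your Hartogs-extension and $H^0(\mathbb{P}^{r-1},\Omega^b)$ arguments are fine and again more explicit than the paper, which simply observes that the proof of \cite[Lemma 4.1 (i)(ii)]{RYY2} goes through unchanged without compactness.
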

\begin{proof}
The proof of \cite[Lemma 4.1 (i)(ii)]{RYY2} also hold for possibly noncompact manifolds, so we obtain $(1)$ and $(2)$. For $(3)$, let $\mathcal{O}_{\widetilde{X}_Z}(1)$ be the ideal sheaf $\mathcal{O}_{\widetilde{X}_Z}(-E)$ associated to the divisor $-E$ on $\widetilde{X}_Z$. 
Choose an open covering $\mathcal{U}$ of $X$ such that the closure $\overline{U}$ is compact for any $U\in\mathcal{U}$. Since the blow-up $\pi$ is a projective morphism (\cite[p. 290, Remark 2.1 (0)]{GPR}), by Grauert-Remmert theorem (\cite[IV, Theorem 2.1 (B))]{BS}, there exists an integer $m_U$, such that
\begin{displaymath}
R^q\pi_*(\Omega^p_{\widetilde{X}_Z}\otimes_{\mathcal{O}_{\widetilde{X}_Z}}\mathcal{O}_{\widetilde{X}_Z}(l))=0
\end{displaymath}
on $\overline{U}$ for $q\geq1$ and $l\geq m_U$, where $m_U$ only depends on $\bar{U}$ and $\Omega^p_{\widetilde{X}_Z}$. Following the steps of  \cite[Proposition 3.3]{Gro} or \cite[Lemma 4.1]{RYY2} on the base $U$, we get an isomorphism
\begin{displaymath}
i_{E\cap \widetilde{U}}^*:R^q\pi_*\Omega_{\widetilde{U}_{Z\cap U}}^p\tilde{\rightarrow}i_{Z\cap U*}R^q(\pi|_{E\cap \widetilde{U}})_*\Omega_{E\cap \widetilde{U}}^p,
\end{displaymath}
for $q\geq1$, which is just the restriction of $i_E^*$ to $U$. Since $X$ is covered by $\mathcal{U}$,  $i_E^*$ is an isomorphism on $X$.
\end{proof}

\begin{lem}\label{2}
For any $p\geq0$,
\begin{displaymath}
R^q\pi_*\mathcal{F}^p_{\widetilde{X}_Z,E}=\left\{
 \begin{array}{ll}
\mathcal{F}^p_{X,Z},&~q=0\\
 &\\
 0,&~q\geq1.
 \end{array}
 \right.
\end{displaymath}
\end{lem}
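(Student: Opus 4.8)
The plan is to apply the derived direct image $R\pi_*$ to the analogue of the exact sequence (\ref{short}) for the pair $(\widetilde{X}_Z,E)$, namely
\begin{equation*}
0\to\mathcal{F}^p_{\widetilde{X}_Z,E}\to\Omega^p_{\widetilde{X}_Z}\to i_{E*}\Omega^p_E\to 0,
\end{equation*}
and to read off all three direct images from the resulting long exact sequence, using Lemma \ref{1} on $\widetilde{X}_Z$ and the exact sequence (\ref{short}) downstairs. Writing $A^q=R^q\pi_*\mathcal{F}^p_{\widetilde{X}_Z,E}$, $B^q=R^q\pi_*\Omega^p_{\widetilde{X}_Z}$ and $C^q=R^q\pi_*(i_{E*}\Omega^p_E)$, the long exact sequence reads $\cdots\to A^q\to B^q\xrightarrow{\alpha_q}C^q\to A^{q+1}\to\cdots$, so the whole computation reduces to understanding the maps $\alpha_q$.

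First I would rewrite the $C$-terms. Since $i_E$ and $i_Z$ are closed immersions, their pushforwards are exact and carry no higher direct images, so the Grothendieck spectral sequences attached to $\pi\circ i_E=i_Z\circ(\pi|_E)$ degenerate and give a canonical identification $C^q=R^q\pi_*(i_{E*}\Omega^p_E)\cong i_{Z*}R^q(\pi|_E)_*\Omega^p_E$. Under it, the map $\alpha_q\colon B^q\to C^q$ induced by restriction to $E$ is exactly the morphism $i_E^*$ of Lemma \ref{1}, so $\alpha_q$ is an isomorphism for every $q\geq1$ by Lemma \ref{1}(3). For $q=0$ I would instead invoke parts (1) and (2): the functoriality $i_E^*\circ\pi^*=(\pi|_E)^*\circ i_Z^*$ furnishes a commutative square whose vertical arrows $\pi^*\colon\Omega^p_X\xrightarrow{\sim}B^0$ and $(\pi|_E)^*\colon i_{Z*}\Omega^p_Z\xrightarrow{\sim}C^0$ are isomorphisms, which identifies $\alpha_0$ with the restriction $i_Z^*\colon\Omega^p_X\to i_{Z*}\Omega^p_Z$.

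With the $\alpha_q$ in hand the conclusion is a diagram chase through the canonical short exact sequences $0\to\mathrm{coker}(\alpha_{q-1})\to A^q\to\ker(\alpha_q)\to 0$. For $q\geq2$ both $\alpha_{q-1}$ and $\alpha_q$ are isomorphisms, so both outer terms vanish and $A^q=0$. For $q=1$ one gets $A^1\cong\mathrm{coker}(\alpha_0)=\mathrm{coker}(i_Z^*)$, which is $0$ because $i_Z^*$ is surjective by (\ref{short}); and for $q=0$ one gets $A^0\cong\ker(\alpha_0)=\ker(i_Z^*)=\mathcal{F}^p_{X,Z}$, again by (\ref{short}). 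This is precisely the asserted value of $R^q\pi_*\mathcal{F}^p_{\widetilde{X}_Z,E}$.

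The step I expect to carry the real weight is the identification of $\alpha_q$ with $i_E^*$ together with the commutativity of the $q=0$ square, since the rest is formal once Lemma \ref{1} is available. Fortunately the genuinely analytic input, the vanishing of $R^q\pi_*(\Omega^p_{\widetilde{X}_Z}\otimes_{\mathcal{O}_{\widetilde{X}_Z}}\mathcal{O}_{\widetilde{X}_Z}(l))$ for large $l$ and the resulting isomorphism $i_E^*$ in degrees $q\geq1$, has already been secured in Lemma \ref{1}, so the present lemma is essentially a bookkeeping argument on the long exact sequence of derived direct images.
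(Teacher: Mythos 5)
Your proposal is correct and follows essentially the same route as the paper: both apply $R\pi_*$ to the short exact sequence $0\to\mathcal{F}^p_{\widetilde{X}_Z,E}\to\Omega^p_{\widetilde{X}_Z}\to i_{E*}\Omega^p_E\to 0$, identify $R^q\pi_*(i_{E*}\Omega^p_E)$ with $i_{Z*}R^q(\pi|_E)_*\Omega^p_E$ using exactness of $i_{Z*}$ (the paper via an injective resolution, you via the degenerate Grothendieck spectral sequence, which is the same observation), and then read off the answer from the long exact sequence using Lemma \ref{1}(3) in degrees $q\geq1$ and Lemma \ref{1}(1)(2) together with the surjectivity of $\Omega^p_X\to i_{Z*}\Omega^p_Z$ from (\ref{short}) in degree $0$. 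Your bookkeeping via $0\to\mathrm{coker}(\alpha_{q-1})\to A^q\to\ker(\alpha_q)\to 0$ is just a slightly more systematic phrasing of the paper's diagram chase.
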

\begin{proof}
For the short exact sequence $($\ref{short}$)$ of the pair $(\widetilde{X}_Z,E)$, we have a long exact sequence
\begin{equation}\label{long2}
\xymatrix{
0\ar[r] &\pi_*\mathcal{F}_{\widetilde{X}_Z,E}^p\ar[r] &\pi_*\Omega_{\widetilde{X}_Z}^p\ar[r] &\pi_*i_{E*}\Omega_{E}^p\ar[r]&R^1\pi_*\mathcal{F}_{\widetilde{X}_Z,E}^p\\
\ar[r]&R^1\pi_*\Omega_{\widetilde{X}_Z}^p\ar[r]&R^1\pi_*(i_{E*}\Omega_{E}^p)\ar[r]&\cdots\ar[r]&R^q\pi_*\mathcal{F}_{\widetilde{X}_Z,E}^p\\
\ar[r]&R^q\pi_*\Omega_{\widetilde{X}_Z}^p\ar[r]&R^q\pi_*(i_{E*}\Omega_{E}^p)\ar[r]&\cdots.
}
\end{equation}
Let $\Omega_{E}^p\rightarrow\mathcal{I}^\bullet$ be an injective resolution of $\Omega_{E}^p$. Then $i_{E*}\Omega_{E}^p\rightarrow i_{E*}\mathcal{I}^\bullet$ is also an injective resolution. Since $i_{Z*}$ is an exact functor,
\begin{displaymath}
R^q\pi_*(i_{E*}\Omega_{E}^p)=\mathcal{H}^q(\pi_*(i_{E*}\mathcal{I}^\bullet))=\mathcal{H}^q(i_{Z*}(\pi|_E)_*\mathcal{I}^\bullet))=i_{Z*}R^q(\pi|_E)_*\Omega_{E}^p.
\end{displaymath}
By Lemma \ref{1} (3) and the sequence (\ref{long2}), we get $R^q\pi_*\mathcal{F}^p_{\widetilde{X}_Z,E}=0$ for $q\geq2$. Moreover, by Lemma \ref{1} (1) (2), $\pi_*\Omega_{\widetilde{X}_Z}^p\rightarrow\pi_*i_{E*}\Omega_{E}^p$ is just $\Omega_{X}^p\rightarrow i_{Z*}\Omega_{Z}^p$, which is epimorphic. So $\mathcal{F}^p_{X,Z}\cong\pi_*\mathcal{F}_{\widetilde{X}_Z,E}^p$ and $R^1\pi_*\mathcal{F}^p_{\widetilde{X}_Z,E}=0$. We complete the proof.
\end{proof}

\begin{rem}
The first part of this lemma was proved with a direct way in the third version of \cite{RYY}  (cf. Lemma 3.10) and the second part seems not to be treated there. After them, we first gave the above proof in the early version \cite{M2} of the present article. Recently, Rao, S. et al. \cite{RYY2} studied this lemma for the bundle-valued cases with our approach. Actually, their results can be obtained directly by ours and the projection formula.
\end{rem}

By the Leray spectral sequence and Lemma \ref{2}, $\pi^*$ induces an isomorhism
\begin{displaymath}
H^q(X,\mathcal{F}_{X,Z}^p)\cong H^q(X,\pi_*\mathcal{F}_{\widetilde{X}_Z,E}^p)\cong H^q(\widetilde{X}_Z,\mathcal{F}_{\widetilde{X}_Z,E}^p).
\end{displaymath}
For a fixed $p$.  From the sequence (\ref{short0}), we get a commutative diagram
\begin{displaymath}
\small{\xymatrix{
 0\ar[r]&\Gamma(X,\mathcal{G}_{X,Z}^{p,\bullet})\ar[d]^{\pi^*} \ar[r]& \Gamma(X,\mathcal{A}_{X}^{p,\bullet})\ar[d]^{\pi^*} \ar[r]^{i_Z^*}& \Gamma(Y,\mathcal{A}_{Z}^{p,\bullet}) \ar[d]^{(\pi|_E)^*}\ar[r]& 0\\
 0\ar[r]&\Gamma(\widetilde{X},\mathcal{G}_{\widetilde{X},E}^{p,\bullet})    \ar[r]^{}& \Gamma(\widetilde{X},\mathcal{A}_{\widetilde{X}}^{p,\bullet})  \ar[r]^{i_E^*} &  \Gamma(E,\mathcal{A}_{E}^{p,\bullet})    \ar[r]& 0}}
\end{displaymath}
of short exact sequences of complexes, since $\mathcal{G}_{X,Y}^{*,*}$ and $\mathcal{G}_{\widetilde{X},E}^{*,*}$ are soft sheaves. It induces a commutative diagram
\begin{displaymath}
\small{\xymatrix{
    \cdots\ar[r]&H^q(X,\mathcal{F}_{X,Z}^{p}) \ar[d]^{\cong} \ar[r]& H_{\overline{\partial}}^{p,q}(X) \ar[d]^{\pi^*}\ar[r]^{i_Z^*}&  H_{\overline{\partial}}^{p,q}(Z)\ar[d]^{(\pi|_E)^*}\ar[r]&H^{q+1}(X,\mathcal{F}_{X,Z}^{p})\ar[d]^{\cong}\ar[r]&\cdots\\
 \cdots \ar[r] & H^q(\widetilde{X}_Z,\mathcal{F}_{\widetilde{X}_Z,E}^{p})\ar[r]& H_{\overline{\partial}}^{p,q}(\widetilde{X})       \ar[r]^{i_E^*}& H_{\overline{\partial}}^{p,q}(E)     \ar[r] & H^{q+1}(\widetilde{X}_Z,\mathcal{F}_{\widetilde{X}_Z,E}^{p})\ar[r]&\cdots}}
\end{displaymath}
of long exact sequences, where $\pi^*$ and $(\pi|_E)^*$ are injective by the projection formula (\cite[VI, (12.8)]{D}) and Corollary \ref{poj-D}, respectively. By the snake lemma, $i_E^*$ induces an isomorphism
$\textrm{coker}\pi^*\tilde{\rightarrow}\textrm{coker}(\pi|_E)^*$. We get a commutative diagram
\begin{equation}\label{commutative1}
\xymatrix{
 0\ar[r]&H_{\bar{\partial}}^{p,q}(X)\ar[d]^{i_Z^*} \ar[r]^{\pi^*}& H_{\bar{\partial}}^{p,q}(\widetilde{X}_Z)\ar[d]^{i_E^*} \ar[r]& \textrm{coker}\pi^* \ar[d]^{\cong}\ar[r]& 0\\
 0\ar[r]&H_{\bar{\partial}}^{p,q}(Z)       \ar[r]^{(\pi|_E)^*}& H_{\bar{\partial}}^{p,q}(E)   \ar[r]^{} &  \textrm{coker} (\pi|_E)^*    \ar[r]& 0 }
\end{equation}
of short exact sequences for any $p$, $q$.

Denote by $\Theta(\mathcal{O}_E(-1))$ the curvature of the Chern connection of a hermitian metric of the universal bundle $\mathcal{O}_E(-1)$ over $E$. Then $\frac{i}{2\pi}\Theta(\mathcal{O}_E(-1))\in \mathcal{A}^{1,1}(E)$ is real and $\bar{\partial}$-closed.
Set $h=[\frac{i}{2\pi}\Theta(\mathcal{O}_E(-1))]$ the Dolbeault class of $\frac{i}{2\pi}\Theta(\mathcal{O}_E(-1))$ in $H_{\bar{\partial}}^{1,1}(E)$. We first consider a special case of  Theorem \ref{main}.
\begin{lem}[Self-intersection formula]\label{key}
Suppose that $Z$ is a Stein manifold. Then
\begin{displaymath}
i_E^*i_{E*}\sigma=h\cup\sigma
\end{displaymath}
for any $\sigma\in H_{\bar{\partial}}^{*,*}(E)$.
\end{lem}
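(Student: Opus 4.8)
The plan is to prove the self-intersection formula $i_E^*i_{E*}\sigma = h\cup\sigma$ in the Stein case by reducing the assertion to an identity about Chern classes, namely the fact that the universal line bundle $\mathcal{O}_E(-1)$ is precisely the normal bundle $N_{E/\widetilde{X}_Z}$ of the exceptional divisor in the blow-up. First I would recall the general shape of a self-intersection formula: for a divisor inclusion $i_E:E\hookrightarrow\widetilde{X}_Z$, the composite $i_E^*\circ i_{E*}$ should act as cup product with the Dolbeault class of the first Chern form of the normal bundle $N_{E/\widetilde{X}_Z}$. Since $E=\mathbb{P}(N_{Z/X})$ is the projectivized normal bundle and $\mathcal{O}_{\widetilde{X}_Z}(E)|_E\cong N_{E/\widetilde{X}_Z}\cong\mathcal{O}_E(-1)$ by the standard geometry of blow-ups, the class governing the formula is exactly $h=\bigl[\tfrac{i}{2\pi}\Theta(\mathcal{O}_E(-1))\bigr]$. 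So the content is to make the gysin map $i_{E*}$ precise on Dolbeault cohomology and to verify the compatibility $i_E^*i_{E*}\sigma=c_1(N_{E/\widetilde{X}_Z})\cup\sigma=h\cup\sigma$.

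The key steps, in order, are as follows. First I would give a representative-level description of $i_{E*}$: for a $\bar\partial$-closed form $\tau$ on $E$, the pushforward $i_{E*}[\tau]$ is represented by the current $\tau\wedge[E]$, or more usefully by a smooth $(1,1)$-type ``Thom-type'' form supported near $E$ whose restriction to $E$ recovers the Chern form. Concretely, I would exploit the short exact sequence $0\to\mathcal{O}_{\widetilde{X}_Z}(-E)\to\mathcal{O}_{\widetilde{X}_Z}\to i_{E*}\mathcal{O}_E\to 0$ and the Poincaré–Lelong formula: if $s$ is the canonical section of $\mathcal{O}_{\widetilde{X}_Z}(E)$ cutting out $E$, then $\tfrac{i}{2\pi}\partial\bar\partial\log\|s\|^2=[E]-\tfrac{i}{2\pi}\Theta(\mathcal{O}_{\widetilde{X}_Z}(E))$ as currents. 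This produces a smooth closed form $\eta_E$ representing the cohomology class dual to $E$ whose restriction $i_E^*\eta_E$ represents $c_1(\mathcal{O}_{\widetilde{X}_Z}(E)|_E)=-h$; adjusting the sign gives the correct identification. Second, for $\sigma=[\tau]$ with $\tau$ a $\bar\partial$-closed form on $E$, I would compute $i_E^*i_{E*}\sigma$ by realizing $i_{E*}\sigma$ via the form $\tilde\tau\wedge\eta_E$ for a $\bar\partial$-closed extension $\tilde\tau$ of $\tau$, and then restricting to $E$ to obtain $i_E^*(\tilde\tau\wedge\eta_E)=\tau\wedge i_E^*\eta_E=\tau\wedge h$ at the level of classes.

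The main obstacle is making the Gysin pushforward $i_{E*}$ well-defined and computable on Dolbeault cohomology in the \emph{noncompact} Stein setting, where one cannot appeal to Poincaré–Serre duality as in the compact case. The Steinness of $Z$ (hence of $E$, being a projective bundle over a Stein base, which is a proper modification matter) is what I would use to guarantee that the relevant cohomology groups behave well and that global $\bar\partial$-closed extensions $\tilde\tau$ of fiber forms exist; this is where I expect to lean on the sheaf-theoretic setup from Lemmas \ref{1} and \ref{2} together with Cartan's Theorem B to vanish obstructions to extending and to representing classes by global smooth forms. A delicate point is the coherence between the \emph{current}-level definition of $i_{E*}$ and the \emph{smooth form} representatives needed to restrict back via $i_E^*$: I would resolve it by choosing the hermitian metric on $\mathcal{O}_E(-1)$ compatibly with the ambient metric on $\mathcal{O}_{\widetilde{X}_Z}(E)$, so that the Poincaré–Lelong identity directly yields $i_E^*\eta_E=h$ on the nose rather than merely up to a $\bar\partial$-exact term. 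Once this compatibility is pinned down, the formula $i_E^*i_{E*}\sigma=h\cup\sigma$ follows by a direct computation, and the general (non-Stein) case of Theorem \ref{main} will then be assembled from this local statement via the gluing principle of Lemma \ref{glued}.
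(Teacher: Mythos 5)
Your overall skeleton matches the paper's: represent $\sigma$ by a form, extend it (modulo $\bar{\partial}$-exact terms) to a $\bar{\partial}$-closed form on a neighborhood of $E$, use the projection formula for currents to write $i_{E*}\sigma$ as that extension wedged with the integration current $[E]$, replace $[E]$ by the smooth Chern form of $\mathcal{O}_{\widetilde{X}_Z}(E)$ up to a $\bar{\partial}$-exact current, and restrict back to $E$. Where you genuinely differ is in how the discrepancy $[E]-\frac{i}{2\pi}\Theta(\mathcal{O}_{\widetilde{X}_Z}(E))$ is shown to be $\bar{\partial}$-exact: you invoke Poincar\'e--Lelong, which gives $[E]-\frac{i}{2\pi}\Theta=\bar{\partial}\bigl(-\frac{i}{2\pi}\partial\log\|s\|^2\bigr)$ outright, whereas the paper only knows this difference is $d$-exact and must use $H^{0,1}_{\bar{\partial}}(\widetilde{U})=H^{0,1}_{\bar{\partial}}(U)=0$ (Steinness of $U$) to upgrade $d$-exactness to $\bar{\partial}$-exactness. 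Your route is cleaner at that step and does not consume the Stein hypothesis there. (A minor point: since $\mathcal{O}_{\widetilde{X}_Z}(E)|_E\cong N_{E/\widetilde{X}_Z}\cong\mathcal{O}_E(-1)$, the restricted Chern form already represents $h$ with the paper's convention, not $-h$; no sign adjustment is needed.)

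The genuine gap is the extension step, which you correctly single out as the main obstacle but do not actually resolve. Cartan's Theorem B does not produce $\bar{\partial}$-closed extensions of $\bar{\partial}$-closed forms from $E$, and surjectivity of the restriction $H^{p,q}_{\bar{\partial}}(\widetilde{U})\rightarrow H^{p,q}_{\bar{\partial}}(E)$ is not automatic (nor is $E$ Stein: it is a projective bundle over $Z$ and contains compact fibers, so your parenthetical inference from the Steinness of $Z$ fails). What the paper actually does is twofold: (i) from Lemmas \ref{1} and \ref{2} and the Leray spectral sequence it derives the diagram (\ref{commutative1}), giving an isomorphism $\mathrm{coker}\,\pi^*\tilde{\rightarrow}\mathrm{coker}\,(\pi|_E)^*$, whence any representative $\alpha$ of $\sigma$ can be written as $l_E^*\gamma+(\pi|_E)^*\delta+\bar{\partial}\eta$ with $\gamma$ $\bar{\partial}$-closed on $\widetilde{U}$ and $\delta$ $\bar{\partial}$-closed on $Z$; and (ii) it invokes the Docquier--Grauert/Siu theorem to choose a Stein neighborhood $U$ of $Z$ admitting a holomorphic retraction $\tau:U\rightarrow Z$, which converts the residual term $(\pi|_E)^*\delta$ into $l_E^*\bigl((\pi|_{\widetilde{U}})^*\tau^*\delta\bigr)$. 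The retraction is the essential use of the Stein hypothesis for the extension and is absent from your plan; without it, the classes in $(\pi|_E)^*H^{p,q}_{\bar{\partial}}(Z)$ need not lie in the image of $l_E^*$. Note also that the resulting ``extension'' is only an extension modulo the $\bar{\partial}$-exact form $\bar{\partial}\eta$, whose pushforward $\bar{\partial}(l_{E*}\eta)$ must be carried through the computation; this is harmless but has to be tracked. Once these two ingredients are supplied, your argument closes and yields the same conclusion as the paper's.
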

\begin{proof}
By \cite[Theorem 3.3.3]{Fo}, we can choose a Stein neighborhood  $U$  of $Z$ such that there exists a holomorphic map $\tau: U\rightarrow Z$ satisfying $\tau\circ l_Z=id_Z$, where $l_Z:Z\rightarrow U$ is the inclusion.
Let $\alpha\in \mathcal{A}^{p,q}(E)$ be a representative of $\sigma$.  By $($\ref{commutative1}$)$, $l_E^*$ induces the isomorphism
\begin{displaymath}
H_{\bar{\partial}}^{p,q}(\widetilde{U})/\pi^*H_{\bar{\partial}}^{p,q}(U)\tilde{\rightarrow} H_{\bar{\partial}}^{p,q}(E)/(\pi|_E)^*H_{\bar{\partial}}^{p,q}(Z),
\end{displaymath}
where $l_E:E\rightarrow\widetilde{U}$ is the inclusion. Hence, there exist a form $\eta\in\mathcal{A}^{p,q-1}(E)$ and $\bar{\partial}$-closed forms $\gamma\in\mathcal{A}^{p,q}(\widetilde{U})$, $\delta\in\mathcal{A}^{p,q}(Z)$, such that $\alpha=l_E^*\gamma+(\pi|_E)^*\delta+\bar{\partial}\eta$.  So
\begin{displaymath}
\alpha=l_E^*\left(\gamma+(\pi|_{\tilde{U}})^*\tau^*\delta\right)+\bar{\partial}\eta.
\end{displaymath}

In $H_{dR}^2(\widetilde{U})$, $[l_{E*}(1)]=[E]$, where $[E]$ is the fundamental class of $E$ in $\widetilde{U}$. Since $[E]=c_1(\mathcal{O}_{\widetilde{U}}(E))=[\frac{i}{2\pi}\Theta(\mathcal{O}_{\widetilde{U}}(E))]\in H_{dR}^2(\widetilde{U})$, $l_{E*}(1)=\frac{i}{2\pi}\Theta(\mathcal{O}_{\widetilde{U}}(E))+dR$ for a real current $R$ of degree $1$ on $\widetilde{U}$. Set $R=S+\bar{S}$, where $S\in\mathcal{D}^{\prime0,1}(\widetilde{U})$. Thus $\bar{\partial}S=0$ by comparing the degrees. By \cite[Corollary 2.15]{U},  $H_{\bar{\partial}}^{0,1}(\widetilde{U})=H_{\bar{\partial}}^{0,1}(U)=0$, since  $U$ is Stein. So $S=\bar{\partial}S_1$ for some $S_1\in\mathcal{D}^{\prime0,0}(\widetilde{U})$. Set $T=\partial(\bar{S_1}-S_1)$. Then
\begin{displaymath}
l_{E*}(1)=\frac{i}{2\pi}\Theta(\mathcal{O}_{\widetilde{U}}(E))+\bar{\partial}T.
\end{displaymath}
By the projection formula of currents,
\begin{displaymath}
\begin{aligned}
l_{E*}\alpha=&l_{E*}(1)\wedge\left(\gamma+(\pi|_{\widetilde{U}})^*\tau^*\delta\right)+\bar{\partial}(l_{E*}\eta)\\
=&\frac{i}{2\pi}\Theta(\mathcal{O}_{\widetilde{U}}(E))\wedge(\gamma+(\pi|_{\widetilde{U}})^*\tau^*\delta)+\bar{\partial}Q,
\end{aligned}
\end{displaymath}
where $Q=l_{E*}\eta+T\wedge(\gamma+(\pi|_{\widetilde{U}})^*\tau^*\delta)\in\mathcal{D}^{\prime p+1,q}(\widetilde{U})$.

 Let $j:\widetilde{U}\rightarrow \tilde{X}_Z$ be the inclusion. Since $\textrm{supp}(l_{E*}\alpha)\subseteq E$, $j|_{\textrm{supp}(l_{E*}\alpha)}$ is proper, which implies that $j_*(l_{E*}\alpha)$ is defined well.  Clearly, $j_*(l_{E*}\alpha)=i_{E*}\alpha$ and $j^*j_*l_{E*}\alpha=l_{E*}\alpha$.  So
\begin{displaymath}
\begin{aligned}
i_E^*i_{E*}\sigma=&i_E^*[i_{E*}\alpha]=l_E^*[j^*j_*l_{E*}\alpha]\\
=&l_E^*[l_{E*}\alpha]\\
=&l_E^*[\frac{i}{2\pi}\Theta(\mathcal{O}_{\widetilde{U}}(E))]\cup[\alpha]\\
=&h\cup\sigma,
\end{aligned}
\end{displaymath}
where $[\beta]$ denotes the Dolbeault  class of  the $\bar{\partial}$-closed form $\beta$ and  the last equation used the fact that $\mathcal{O}_E(-1)=\mathcal{O}_{\widetilde{U}}(E)|_E$.
\end{proof}

\begin{prop}\label{special}
Theorem \ref{main} holds for the case that $Z$ is a Stein manifold.
\end{prop}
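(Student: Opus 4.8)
The plan is to exhibit the homomorphism (\ref{b-u-m}), which I will call $\Phi$, as the middle arrow of a morphism between two short exact sequences whose outer arrows are isomorphisms, and then to conclude by the five-lemma. The relevant short exact sequence is the top row of (\ref{commutative1}),
\[
0\rightarrow H_{\bar{\partial}}^{p,q}(X)\xrightarrow{\pi^*}H_{\bar{\partial}}^{p,q}(\widetilde{X}_Z)\xrightarrow{\varrho}\textrm{coker}\,\pi^*\rightarrow 0,
\]
and I would use the isomorphism $\kappa:\textrm{coker}\,\pi^*\xrightarrow{\sim}\textrm{coker}(\pi|_E)^*$ induced by $i_E^*$ that is also recorded in (\ref{commutative1}); its defining property is the commutativity $\varrho'\circ i_E^*=\kappa\circ\varrho$, where $\varrho'$ is the bottom quotient map. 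Writing the source of $\Phi$ as $H_{\bar{\partial}}^{p,q}(X)\oplus W$ with $W=\bigoplus_{i=1}^{r-1}H_{\bar{\partial}}^{p-i,q-i}(Z)$, the restriction of $\Phi$ to the first summand is exactly $\pi^*$, so once the ladder is set up its left-hand square commutes on the nose with left vertical arrow the identity of $H_{\bar{\partial}}^{p,q}(X)$; the content is therefore to understand what $\Phi$ does on $W$ after passing to the cokernel.

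First I would identify $\textrm{coker}(\pi|_E)^*$ explicitly. Since $\pi|_E:E=\mathbb{P}(N_{Z/X})\rightarrow Z$ is a projective bundle with fibre $\mathbb{P}^{r-1}$, Corollary \ref{poj-D} gives that $\sum_{i=0}^{r-1}(\pi|_E)^*(\bullet)\cup h^i$ is an isomorphism $\bigoplus_{i=0}^{r-1}H_{\bar{\partial}}^{p-i,q-i}(Z)\rightarrow H_{\bar{\partial}}^{p,q}(E)$, whose $i=0$ summand is precisely $\textrm{im}(\pi|_E)^*$. Hence, after reduction modulo $\textrm{im}(\pi|_E)^*$, the assignment $(\beta_i)_{i=1}^{r-1}\mapsto\sum_{i=1}^{r-1}(\pi|_E)^*\beta_i\cup h^i$ is an isomorphism $W\xrightarrow{\sim}\textrm{coker}(\pi|_E)^*$. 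Next comes the key computation, which is where the Stein hypothesis enters through Lemma \ref{key}: for $\beta\in H_{\bar{\partial}}^{p-i,q-i}(Z)$ the self-intersection formula gives
\[
i_E^*\,i_{E*}\bigl(h^{i-1}\cup(\pi|_E)^*\beta\bigr)=h\cup h^{i-1}\cup(\pi|_E)^*\beta=h^i\cup(\pi|_E)^*\beta.
\]
Combining this with $\varrho'\circ i_E^*=\kappa\circ\varrho$, I compute on $W$ that
\[
\kappa\circ\varrho\circ\Phi\bigl((\beta_i)_i\bigr)=\varrho'\,i_E^*\sum_{i=1}^{r-1}i_{E*}\bigl(h^{i-1}\cup(\pi|_E)^*\beta_i\bigr)=\varrho'\sum_{i=1}^{r-1}(\pi|_E)^*\beta_i\cup h^i,
\]
which is exactly the isomorphism $W\xrightarrow{\sim}\textrm{coker}(\pi|_E)^*$ identified above. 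Consequently $\varrho\circ\Phi|_W=\kappa^{-1}\circ(\text{that isomorphism})$ is itself an isomorphism $W\xrightarrow{\sim}\textrm{coker}\,\pi^*$.

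Assembling these, I obtain a commutative ladder whose top row is the split sequence $0\rightarrow H_{\bar{\partial}}^{p,q}(X)\rightarrow H_{\bar{\partial}}^{p,q}(X)\oplus W\rightarrow W\rightarrow 0$, whose bottom row is the displayed sequence above, whose left vertical arrow is the identity of $H_{\bar{\partial}}^{p,q}(X)$, whose middle arrow is $\Phi$, and whose right vertical arrow is the isomorphism $\varrho\circ\Phi|_W$ just produced (so the right square commutes by construction). The five-lemma then forces $\Phi$ to be an isomorphism for all $p$, $q$, which is the assertion of the proposition. The main obstacle is the middle step: verifying that, after reduction to the cokernel, the Gysin contributions $i_{E*}\circ(h^{i-1}\cup)\circ(\pi|_E)^*$ collapse into the clean powers $h^i\cup(\pi|_E)^*(\bullet)$ and that these assemble, via the projective bundle decomposition of Corollary \ref{poj-D}, into an isomorphism onto $\textrm{coker}(\pi|_E)^*$; here one must use both the self-intersection formula of Lemma \ref{key} (valid precisely because $Z$ is Stein) and the fact, read off from (\ref{commutative1}), that the cokernel isomorphism $\kappa$ is induced by $i_E^*$.
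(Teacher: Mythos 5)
Your proposal is correct and rests on exactly the same ingredients as the paper's proof: the self-intersection formula of Lemma \ref{key}, the projective bundle formula of Corollary \ref{poj-D}, and the diagram (\ref{commutative1}); the only difference is that you package the argument as a five-lemma ladder, whereas the paper checks injectivity and surjectivity separately (citing Wells for the injectivity of $\pi^*$, which is equivalent to invoking the exactness of the top row of (\ref{commutative1}) as you do).
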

\begin{proof}
Suppose that $\pi^*\alpha^{p,q}+\sum\limits_{i=1}^{r-1}i_{E*}\left(h^{i-1}\cup(\pi|_E)^*\beta^{p-i,q-i}\right)=0$, where $\alpha^{p,q}\in H_{\bar{\partial}}^{p,q}(X)$ and $\beta^{p-i,q-i}\in H_{\bar{\partial}}^{p-i,q-i}(Z)$ for $1\leq i\leq r-1$. Pull it back  by $i_E^*$, we get
\begin{displaymath}
(\pi|_E)^*i_Z^*\alpha^{p,q}+\sum_{i=1}^{r-1}h^{i}\cup(\pi|_E)^*\beta^{p-i,q-i}=0
\end{displaymath}
by Lemma \ref{key}, where $i_Z:Z\rightarrow X$ is the inclusion.
By Corollary \ref{poj-D}, $\beta^{p-i,q-i}=0$ for every $i$. So $\pi^*\alpha^{p,q}=0$. By \cite[Theorem 3.1]{W}, $\pi^*$ is injective, so $\alpha^{p,q}=0$. The injectivity of the morphism $($\ref{b-u-m}$)$ is true.

For any $\gamma\in H_{\bar{\partial}}^{p,q}(\widetilde{X}_Z)$, by Corollary \ref{poj-D}, there exist $\beta^{p-i,q-i}\in H_{\bar{\partial}}^{p-i,q-i}(Z)$ for $0\leq i\leq r-1$, such that $i_E^*\gamma=\sum\limits_{i=0}^{r-1}h^i\cup(\pi|_E)^*\beta^{p-i,q-i}$. By Lemma \ref{key},
\begin{displaymath}
i_E^*\left[\gamma-\sum_{i=1}^{r-1}i_{E*}\left(h^{i-1}\cup(\pi|_E)^*\beta^{p-i,q-i}\right)\right]=(\pi|_E)^*\beta^{p,q},
\end{displaymath}
which is zero in $\textrm{coker}(\pi|_E)^*$. By the commutative diagram (\ref{commutative1}),
\begin{displaymath}
\gamma-\sum_{i=1}^{r-1}i_{E*}\left(h^{i-1}\cup(\pi|_E)^*\beta_{p-i,q-i}\right)=\pi^*\alpha^{p,q},
\end{displaymath}
for some $\alpha^{p,q}\in H_{\bar{\partial}}^{p,q}(X)$. So the morphism $($\ref{b-u-m}$)$ is surjective.

We complete the proof.
\end{proof}

By definitions, we easily get
\begin{lem}\label{com}
Let $Z$ be a closed complex submanifold and  $U$ an open complex submanifold of the complex manifold $X$. Consider the cartesian
\begin{displaymath}
\xymatrix{
  Z\cap U\ar[d]_{j'}  \ar[r]^{\quad i'}  & U \ar[d]^{j} \\
  Z \ar[r]^{i}  & X,}
\end{displaymath}
where $i$, $j$, $i'$ and $j'$ are inclusions. Then, $j'_*i'^*=i^*j_*$ on $\mathcal{A}_c^{**}(U)$. Dually, $i'_*j'^*=j^*i_*$ on $\mathcal{D}^{\prime**}(Z)$.
\end{lem}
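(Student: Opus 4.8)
The plan is to unwind the definitions of the four functors attached to the cartesian square and to verify each identity directly; no deep input is needed, and I expect the second (dual) identity to fall out of the first by an adjunction argument. First I would pin down the meaning of each arrow. Since $j$ and $j'$ are open inclusions, $j_*$ and $j'_*$ denote extension by zero, which sends a compactly supported smooth form to a genuinely smooth form on the larger manifold (smoothness holds precisely because the support is compact, hence closed). Since $i$ and $i'$ are closed inclusions, $i^*$ and $i'^*$ are the ordinary restrictions of forms, while $i_*$ and $i'_*$ are the pushforwards of currents defined by the adjunction $\langle i_*T,\varphi\rangle=\langle T,i^*\varphi\rangle$.

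For the first identity I would take $\omega\in\mathcal{A}_c^{**}(U)$ and compare $j'_*i'^*\omega$ and $i^*j_*\omega$ pointwise on $Z$, both being smooth forms on $Z$. At a point $z\in Z\cap U$ each side equals the restriction of $\omega$ to $Z\cap U$ evaluated at $z$, since near such $z$ neither extension-by-zero operation does anything. At a point $z\in Z\setminus U$ both sides vanish: the left-hand side because it is an extension by zero off $Z\cap U$, and the right-hand side because $z$ lies outside $\mathrm{supp}(\omega)\subseteq U$, so $j_*\omega$ is identically zero near $z$. This pointwise agreement, which uses only the cartesian structure $Z\cap U=i^{-1}(U)=j^{-1}(Z)$, gives $j'_*i'^*\omega=i^*j_*\omega$.

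For the dual identity I would pair both sides against an arbitrary test form $\varphi\in\mathcal{A}_c^{**}(U)$ and transport every operation onto $\varphi$ using the adjunctions above: on one side $\langle i'_*j'^*T,\varphi\rangle=\langle j'^*T,i'^*\varphi\rangle=\langle T,j'_*i'^*\varphi\rangle$, and on the other $\langle j^*i_*T,\varphi\rangle=\langle i_*T,j_*\varphi\rangle=\langle T,i^*j_*\varphi\rangle$. The two right-hand sides coincide by the first identity applied to $\varphi$, whence $i'_*j'^*=j^*i_*$ on $\mathcal{D}^{\prime**}(Z)$. The only points that require attention — and the closest thing to an obstacle — are the support bookkeeping that makes each pairing legitimate: that $i'^*\varphi$ has compact support so $j'_*i'^*\varphi$ is defined, that $j_*\varphi$ is a compactly supported smooth form on $X$, and that $i_*T$ is supported in $Z$ so its restriction to $U$ and its pairing with $j_*\varphi$ make sense. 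None of these is substantive, which is exactly why ``by definitions'' is the right description of the argument.
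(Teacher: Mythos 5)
Your proposal is correct and coincides with the paper's argument: the paper dismisses this lemma with ``By definitions, we easily get,'' and your write-up is precisely that verification spelled out --- identifying $j_*,j'_*$ as extension by zero, $i^*,i'^*$ as restriction, checking the first identity pointwise on $Z\cap U$ and on $Z\setminus U$, and deducing the current-level identity by pairing against test forms. The support bookkeeping you flag is exactly the content the paper leaves implicit, and you have handled it correctly.
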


After above preparations, we can prove Theorem \ref{main}.
\begin{proof}
For any $p$, $q$, set
\begin{displaymath}
\mathcal{F}^{p,q}=\mathcal{A}_X^{p,q}\oplus \bigoplus_{i=1}^{r-1}i_{Z*}\mathcal{A}_Z^{p-i,q-i}.
\end{displaymath}
Define $\bar{\partial}:\mathcal{F}^{p,*}\rightarrow\mathcal{F}^{p,*+1}$ as $(\alpha,\beta_0,\ldots,\beta_{r-2})\mapsto (\bar{\partial}\alpha,\bar{\partial}\beta_0,\ldots,\bar{\partial}\beta_{r-2})$. For any $p$, $(\mathcal{F}^{p,\bullet},\bar{\partial})$ is a complex of sheaves.
Let $t=\frac{i}{2\pi}\Theta(\mathcal{O}_E(-1))\in\mathcal{A}^{1,1}(E)$. For any open subset $U$ in $X$, define $\mathcal{F}^{p,q}(U)\rightarrow \mathcal{D}^{\prime p,q}(\widetilde{U})$ as
\begin{displaymath}
\varphi^{p,q}_U=\left\{
 \begin{array}{ll}
(\pi|_{\widetilde{U}})^*+\sum\limits_{i=1}^{r-1}i_{E\cap \widetilde{U}*}\circ (t^{i-1}|_{E\cap \widetilde{U}}\wedge)\circ (\pi|_{E\cap \widetilde{U}})^*,&~Z\cap U\neq\emptyset\\
 &\\
 (\pi|_{\widetilde{U}})^*,&~Z\cap U=\emptyset,
 \end{array}
 \right.
\end{displaymath}
where $i_{E\cap \widetilde{U}}:E\cap \widetilde{U}\rightarrow \widetilde{U}$ is the inclusion. Evidently, $\bar{\partial}\circ\varphi^{p,q}_U=\varphi^{p,q+1}_U\circ\bar{\partial}$. Thus $\varphi^{p,\bullet}_U$ induces a morphism
\begin{displaymath}
\Phi^{p,q}_U:H_{\bar{\partial}}^{p,q}(U)\oplus \bigoplus_{i=1}^{r-1}H_{\bar{\partial}}^{p-i,q-i}(Z\cap U)\rightarrow H_{\bar{\partial}}^{p,q}(\widetilde{U}),
\end{displaymath}
where $H^{*,*}_{\bar{\partial}}(Z\cap U)=0$ for $Z\cap U=\emptyset$.
Denote by $\mathcal{P}(U)$ the statement that $\Phi^{p,q}_U$ are isomorphisms for all $p$, $q$. The theorem is equivalent to say that $\mathcal{P}(X)$ holds.
We need to check that $\mathcal{P}$ satisfies the three conditions in Lemma \ref{glued}. Obviously, $\mathcal{P}$ satisfies the disjoint condition.

For open sets $V\subseteq U$, denote by $\rho^U_V:\mathcal{F}^{p,q}(U)\rightarrow \mathcal{F}^{p,q}(V)$ the restriction of the sheaf $\mathcal{F}^{p,q}$ and by $j^U_V:\mathcal{D}^{\prime p,q}(\widetilde{U})\rightarrow \mathcal{D}^{\prime p,q}(\widetilde{V})$ the  restriction of currents. By Lemma \ref{com},  $j^U_V\circ\varphi_U=\varphi_V\circ\rho^U_V$. Given $p$, for any open subsets $U$, $V$ in $X$, there is a commutative diagram of  complexes
\begin{displaymath}
\xymatrix{
0\ar[r]&\mathcal{F}^{p,\bullet}(U\cup V)\ar[d]^{\varphi^{p,\bullet}_{U\cup V}}\quad \ar[r]^{(\rho_U^{U\cup V},\rho_V^{U\cup V})\quad\quad} & \quad\mathcal{F}^{p,\bullet}(U)\oplus \mathcal{F}^{p,\bullet}(V)\ar[d]^{(\varphi^{p,\bullet}_{U},\varphi^{p,\bullet}_{V})}\quad\ar[r]^{\quad\quad\rho_{U\cap V}^U-\rho_{U\cap V}^V}& \quad\mathcal{F}^{p,\bullet}(U\cap V) \ar[d]^{\varphi^{p,\bullet}_{U\cap V}}\ar[r]& 0\\
0\ar[r]&\mathcal{D}^{\prime p,\bullet}(\widetilde{U}\cup \widetilde{V})     \quad  \ar[r]^{(j_U^{U\cup V},j_V^{U\cup V})\quad\quad}& \quad\mathcal{D}^{\prime p,\bullet}(\widetilde{U})\oplus \mathcal{D}^{\prime p,\bullet}(\widetilde{V})    \quad\ar[r]^{\quad\quad j_{U\cap V}^U-j_{U\cap V}^ V} & \quad\mathcal{D}^{\prime p,\bullet}(\widetilde{U}\cap \widetilde{V})     \ar[r]& 0 .}
\end{displaymath}
The two rows are both exact sequences of complexes. For convenience, denote
\begin{displaymath}
L^{p,q}(U)=H_{\bar{\partial}}^{p,q}(U)\oplus \bigoplus_{i=1}^{r-1}H_{\bar{\partial}}^{p-i,q-i}(Z\cap U).
\end{displaymath}
Therefore, we have a commutative diagram
\begin{displaymath}
\tiny{\xymatrix{
    \cdots\ar[r]&L^{p,q-1}(U\cap V)\ar[d]^{\Phi^{p,q-1}_{U\cap V}}\ar[r]&L^{p,q}(U\cup V) \ar[d]^{\Phi^{p,q}_{U\cup V}} \ar[r]& L^{p,q}(U)\oplus L^{p,q}(V) \ar[d]^{(\Phi^{p,q}_U,\Phi^{p,q}_V)}\ar[r]&  L^{p,q}(U\cap V)\ar[d]^{\Phi^{p,q}_{U\cap V}}\ar[r]&L^{p,q+1}(U\cup V)\ar[d]^{\Phi^{p,q+1}_{U\cup V}}\ar[r]&\cdots\\
 \cdots\ar[r]&H_{\bar{\partial}}^{p,q-1}(\widetilde{U}\cap \widetilde{V})     \ar[r] & H_{\bar{\partial}}^{p,q}(\widetilde{U}\cup \widetilde{V})\ar[r]&H_{\bar{\partial}}^{p,q}(\widetilde{U})\oplus H_{\bar{\partial}}^{p,q}(\widetilde{V})       \ar[r]& H_{\bar{\partial}}^{p,q}(\widetilde{U}\cap \widetilde{V})     \ar[r] & H_{\bar{\partial}}^{p,q+1}(\widetilde{U}\cup \widetilde{V})\ar[r]&\cdots}}
\end{displaymath}
 of long exact sequences. If $\Phi^{p,q}_U$, $\Phi^{p,q}_V$ and $\Phi^{p,q}_{U\cap V}$ are isomorphisms for all $p$, $q$, then so are $\Phi^{p,q}_{U\cup V}$ for all $p$, $q$ by the five-lemma.   Thus $\mathcal{P}$  satisfies the Mayer-Vietoris condition.

Let $\mathcal{U}$ be a basis of the topology of $X$ such that every $U\in \mathcal{U}$ is Stein. For $U_1$, $\ldots$, $U_l\in\mathcal{U}$,  $\bigcap\limits_{i=1}^lU_i\cap Z$ is Stein. By Proposition \ref{special}, $\Phi_{U_1\cap\ldots\cap U_l}$ is an isomorphism, so $\mathcal{P}$ satisfies the local condition.

We complete the proof.
\end{proof}
\begin{rem}
Grivaux, J. \cite[Proposition 1]{Gri} proved the self-intersection formula for Deligne cohomology on compact complex manifolds by the deformation to the normal cone.
His proof is valid for Dolbeault cohomology and hence Theorem \ref{main} can be directly proved as Proposition \ref{special} for compact cases.
Notice that the compactness is necessary there.
\end{rem}

\begin{rem}\label{inverse}
After the early version \cite{M2} of this paper was posed on arXiv, 
Rao, S., Yang, S. and Yang, X.-D. \cite[Theorem 1.2]{RYY2}
gave another expression of the blow-up formula on compact complex manifolds.
If the self-intersection formula holds, their isomorphism is inverse to (\ref{b-u-m}), 
which can be seen from the proof of Theorem \ref{special}.
For more details, refer to \cite[Propositions 6.3, 6.4]{M3}.
\end{rem}

\section{Applications}
\subsection{Strongly $q$-complete manifolds}
Recall that, a complex manifold $X$ is called a \emph{strongly $q$-complete manifold}, if there exists a smooth strongly $q$-convex exhaustion function (cf. \cite[IX, Definition (2.5)]{D}) on it. The following theorem is due to Andreotti, A. and Grauert, H. (cf. \cite[IX, Corollary (4.11)]{D}).
\begin{thm}\label{A-G}
Let $X$ be a strongly $q$-complete manifold. Then $H^p(X,\mathcal{F})=0$ for  any analytic coherent sheaf $\mathcal{F}$ on $X$ and all $p\geq q$.
\end{thm}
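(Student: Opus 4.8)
The plan is to prove the vanishing in two stages --- first for locally free sheaves by an $L^2$ (equivalently $\bar{\partial}$-Neumann) argument on the sublevel sets of the exhaustion, and then for arbitrary coherent $\mathcal{F}$ by d\'evissage. I would first note that the gluing principle of Lemma \ref{glued} is of no help here: taking $\mathcal{P}(U)$ to be ``$H^s(U,\mathcal{F}|_U)=0$ for all $s\geq q$,'' the local and disjoint conditions hold on a basis of Stein opens, but the Mayer--Vietoris condition fails in the critical degree, since the segment $H^{q-1}(U\cap V)\to H^q(U\cup V)\to H^q(U)\oplus H^q(V)$ of the Mayer--Vietoris sequence only forces $H^q(U\cup V)$ to be a quotient of $H^{q-1}(U\cap V)$, which is not assumed to vanish. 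A genuinely analytic input is therefore unavoidable.

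Write $n=\dim_{\mathbb{C}}X$ and let $\varphi$ be a strongly $q$-convex exhaustion, so that $i\partial\bar{\partial}\varphi$ has at least $n-q+1$ positive eigenvalues everywhere and the sublevel sets $X_c=\{\varphi<c\}$ are relatively compact and exhaust $X$. For a regular value $c$, the boundary $\partial X_c$ is a smooth strongly $q$-convex hypersurface, whose Levi form has at least $n-q$ positive eigenvalues; hence Andreotti--Grauert's condition $Z(s)$ holds on $\partial X_c$ for every $s\geq q$, and the Morrey--Kohn--H\"ormander estimates make the $\bar{\partial}$-Neumann problem subelliptic in these degrees. From this I would extract, for a holomorphic vector bundle $E$, two facts: a \emph{bumping lemma}, stating that for $c<c'$ the restriction $H^s(X_{c'},\mathcal{O}(E))\to H^s(X_c,\mathcal{O}(E))$ is an isomorphism for $s\geq q$ (pushing the boundary outward across strongly $q$-convex bumps, and handling the critical points of $\varphi$ by a Morse-type attachment whose index is controlled by the $q$-convexity), and the vanishing $H^s(X_c,\mathcal{O}(E))=0$ for $s\geq q$ on the model pieces near the minimum of $\varphi$. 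Globalizing via $H^s(X,\mathcal{O}(E))=\varprojlim_c H^s(X_c,\mathcal{O}(E))$, the isomorphisms of the bumping lemma make the inverse system Mittag--Leffler, killing the $\varprojlim^1$-term, so that $H^s(X,\mathcal{O}(E))=0$ for $s\geq q$.

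For a general coherent sheaf I would pass from the locally free case by d\'evissage: on a Stein open $U$, $\mathcal{F}|_U$ admits a finite locally free resolution, and, comparing the hypercohomology of such a resolution with the cohomology of $\mathcal{F}$, one propagates the vanishing through the long exact sequences attached to short exact sequences $0\to\mathcal{K}\to\mathcal{O}(E)^{\oplus k}\to\mathcal{F}\to 0$, inducting on the homological dimension while keeping the degree bound $s\geq q$. I expect the main obstacle to be exactly this globalization: the $\bar{\partial}$-estimates are intrinsically local (they live on the relatively compact $X_c$), so the heart of the argument is the bumping lemma together with the control of the inverse limit, and, in the coherent case, the fact that $X$ carries no global finite free resolution forces one to patch the local resolutions through a spectral sequence without leaking out of degrees $\geq q$ --- precisely the step that the naive Mayer--Vietoris argument fails to supply.
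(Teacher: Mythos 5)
This is the classical Andreotti--Grauert vanishing theorem; the paper does not prove it but cites it directly (Demailly, IX, Corollary (4.11)), so there is no in-paper argument to compare against. Your opening observation is correct and worth making: the gluing principle of Lemma \ref{glued} cannot produce this statement because the Mayer--Vietoris condition genuinely fails in the critical degree $p=q$, so an analytic input is unavoidable. Your outline is recognizably the standard proof, but as written it has two real gaps.

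First, the passage $H^s(X,\mathcal{O}(E))=\varprojlim_c H^s(X_c,\mathcal{O}(E))$ with vanishing $\varprojlim^1$-term is not justified in the critical degree $s=q$: the $\varprojlim^1$-obstruction for $H^q$ lives on the inverse system of the groups $H^{q-1}(X_c,\mathcal{O}(E))$, one degree \emph{below} where the $q$-convexity estimates, and hence your bumping isomorphisms, say anything. For $s>q$ your argument closes, but for $s=q$ one needs the Andreotti--Grauert approximation theorem: the restriction maps $H^{q-1}(X_{c'},\cdot)\to H^{q-1}(X_c,\cdot)$ have dense image in the natural Fr\'echet topologies, which is what actually forces the topological Mittag--Leffler condition. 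This Runge-type density statement in degree $q-1$ is a separate theorem and is the delicate point of the whole proof; it cannot be extracted from the subelliptic estimates in degrees $\geq q$ alone.

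Second, the d\'evissage to general coherent $\mathcal{F}$ is not carried out, and the order you propose (globalize the $L^2$ vanishing for bundles first, then resolve $\mathcal{F}$) cannot work, for exactly the reason you flag: $X$ carries no global finite locally free resolution, and there is no spectral sequence available to patch the local ones. The standard proof inverts the order: all the analysis is performed on small bumps contained in Stein coordinate charts, where $\mathcal{F}$ does admit a finite free resolution by syzygies and where the Morrey--Kohn--H\"ormander estimates apply; the finiteness theorem (via Schwartz's theorem on compact perturbations of epimorphisms of Fr\'echet spaces, applied to \v{C}ech complexes of the coherent sheaf itself) and the bumping isomorphisms are then established directly for $\mathcal{F}$ on the relatively compact sublevel sets, and only afterwards does one pass to the limit. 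So the skeleton is right, but the two steps you defer --- the degree-$(q-1)$ approximation theorem and the coherent-sheaf bumping --- are the substance of the proof rather than routine globalization.
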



\begin{prop}
A projective bundle associated to a holomorphic vector bundle of rank $r\geq 2$ is not strongly $q$-complete for $1\leq q\leq r-1$.
\end{prop}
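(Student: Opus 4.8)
The plan is to argue by contradiction through the Andreotti--Grauert vanishing theorem (Theorem \ref{A-G}): if I can exhibit a coherent analytic sheaf $\mathcal{F}$ on the projective bundle $\pi:\mathbb{P}(E)\rightarrow X$ together with an index $p\geq q$ for which $H^p(\mathbb{P}(E),\mathcal{F})\neq 0$, then $\mathbb{P}(E)$ cannot be strongly $q$-complete. The natural candidate is $\mathcal{F}=\Omega^q_{\mathbb{P}(E)}$, whose cohomology in degree $q$ is, by the Dolbeault isomorphism, exactly $H^{q,q}_{\bar{\partial}}(\mathbb{P}(E))=H^q(\mathbb{P}(E),\Omega^q_{\mathbb{P}(E)})$.

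First I would compute $H^{q,q}_{\bar{\partial}}(\mathbb{P}(E))$ using the projective bundle formula of Corollary \ref{poj-D}. Taking $p=q$ there yields
\[
H^{q,q}_{\bar{\partial}}(\mathbb{P}(E))\cong\bigoplus_{i=0}^{r-1}H^{q-i,q-i}_{\bar{\partial}}(X).
\]
Because $1\leq q\leq r-1$, the index $i=q$ lies in the summation range, and the corresponding summand is $H^{0,0}_{\bar{\partial}}(X)=H^0(X,\mathcal{O}_X)$. Since $X$ is connected (as in Corollary \ref{poj-D}), this space contains the constants and is therefore nonzero; concretely, the class $h^q\in H^{q,q}_{\bar{\partial}}(\mathbb{P}(E))$ is the image of the constant $1$ in the $i=q$ summand under the isomorphism, and injectivity of that map forces $h^q\neq 0$. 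Hence $H^{q,q}_{\bar{\partial}}(\mathbb{P}(E))\neq 0$.

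Translating back through Dolbeault's theorem gives $H^q(\mathbb{P}(E),\Omega^q_{\mathbb{P}(E)})\neq 0$, with $\Omega^q_{\mathbb{P}(E)}$ a coherent analytic sheaf. If $\mathbb{P}(E)$ were strongly $q$-complete, then Theorem \ref{A-G} applied to this sheaf with $p=q\geq q$ would force $H^q(\mathbb{P}(E),\Omega^q_{\mathbb{P}(E)})=0$, a contradiction. I therefore conclude that $\mathbb{P}(E)$ is not strongly $q$-complete for any $q$ with $1\leq q\leq r-1$.

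The argument is essentially immediate once Corollary \ref{poj-D} is in hand, so there is no serious obstacle to surmount; the only point requiring genuine care is the bookkeeping of bidegrees. The key observation is that the diagonal choice $p=q$, combined with the constraint $q\leq r-1$, is precisely what isolates the nonvanishing base contribution $H^{0,0}_{\bar{\partial}}(X)$ inside cohomological degree $q$, which is exactly the range in which Andreotti--Grauert vanishing would have to hold for a strongly $q$-complete manifold.
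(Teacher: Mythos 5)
Your proof is correct and follows essentially the same route as the paper: apply Corollary \ref{poj-D} to isolate a nonzero $H^{0,0}_{\bar{\partial}}(X)$ summand inside $H^{p,p}_{\bar{\partial}}(\mathbb{P}(E))$ for some $p\geq q$, then contradict Theorem \ref{A-G}. The only (cosmetic) difference is that the paper uses the single group $H^{r-1,r-1}_{\bar{\partial}}(\mathbb{P}(E))\supseteq H^{0,0}_{\bar{\partial}}(X)$, which handles all $q\leq r-1$ at once, whereas you use $H^{q,q}_{\bar{\partial}}(\mathbb{P}(E))$ separately for each $q$.
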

\begin{proof}
Let $\mathbb{P}(E)$ be the projective bundle associated to a holomorphic vector bundle $E$ of rank $r$.
By Corollary \ref{poj-D}, $H^{r-1}(\mathbb{P}(E),\Omega^{r-1}_{\mathbb{P}(E)})\supseteq H^{0}(X,\mathcal{O}_X)=\mathcal{O}(X)\neq 0$. So $\mathbb{P}(E)$ is not strongly $q$-complete for $1\leq q\leq r-1$ by Theorem \ref{A-G}.
\end{proof}

\begin{prop}
Let $X$ be a connected complex manifold  and $Z$ a connected complex submanifold of $X$ with codimension $r\geq2$. Then the blow-up $\widetilde{X}_Z$ of $X$ along $Z$ is not strongly $q$-complete for $1\leq q\leq r-1$.
\end{prop}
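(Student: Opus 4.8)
The plan is to mimic the preceding proposition: I exhibit a single nonvanishing coherent cohomology group on $\widetilde{X}_Z$ in degree $r-1$ and then invoke the Andreotti--Grauert vanishing of Theorem \ref{A-G}. The point is that a nonzero group in degree $r-1$ obstructs strong $q$-completeness for \emph{every} $q$ with $q\leq r-1$ simultaneously, so one computation settles the whole range at once.

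Concretely, I first recall the Dolbeault isomorphism $H^{r-1,r-1}_{\bar\partial}(\widetilde{X}_Z)\cong H^{r-1}(\widetilde{X}_Z,\Omega^{r-1}_{\widetilde{X}_Z})$, so it suffices to show that the left-hand side is nonzero. Then I apply the blow-up formula of Theorem \ref{main} with $p=q=r-1$:
\begin{displaymath}
H^{r-1,r-1}_{\bar\partial}(\widetilde{X}_Z)\cong H^{r-1,r-1}_{\bar\partial}(X)\oplus\bigoplus_{i=1}^{r-1}H^{r-1-i,\,r-1-i}_{\bar\partial}(Z).
\end{displaymath}
The summand corresponding to $i=r-1$ is $H^{0,0}_{\bar\partial}(Z)=H^0(Z,\mathcal{O}_Z)=\mathcal{O}(Z)$, which is nonzero because $Z$ is a nonempty connected complex manifold and hence carries the constants. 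Therefore $H^{r-1}(\widetilde{X}_Z,\Omega^{r-1}_{\widetilde{X}_Z})\neq 0$.

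Finally, for any $q$ with $1\leq q\leq r-1$ we have $r-1\geq q$, so the existence of a nonzero group $H^{r-1}(\widetilde{X}_Z,\Omega^{r-1}_{\widetilde{X}_Z})$, with $\Omega^{r-1}_{\widetilde{X}_Z}$ a locally free (hence coherent) sheaf, contradicts the conclusion of Theorem \ref{A-G} for a strongly $q$-complete manifold. Thus $\widetilde{X}_Z$ cannot be strongly $q$-complete for $1\leq q\leq r-1$. There is essentially no obstacle here beyond the degree bookkeeping: the only step requiring care is the choice $p=q=r-1$, which ensures that a \emph{single} nonvanishing Dolbeault class, coming from $\mathcal{O}(Z)$ through the exceptional-divisor term $i_{E*}\circ(h^{r-2}\cup)\circ(\pi|_E)^*$ of the isomorphism (\ref{b-u-m}), covers the entire range of $q$ at once.
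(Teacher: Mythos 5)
Your proof is correct and follows exactly the paper's argument: both extract the nonvanishing summand $H^{0,0}_{\bar\partial}(Z)=\mathcal{O}(Z)$ from the blow-up formula of Theorem \ref{main} at bidegree $(r-1,r-1)$ and then invoke the Andreotti--Grauert vanishing of Theorem \ref{A-G}. The only difference is that you spell out the degree bookkeeping that the paper leaves implicit.
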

\begin{proof}
By Theorem \ref{main}, $H^{r-1}(\widetilde{X}_Z,\Omega^{r-1}_{\widetilde{X}_Z})\supseteq H^{0}(Z,\mathcal{O}_Z)=\mathcal{O}(Z)\neq0$. So $\widetilde{X}_Z$ is not strongly $q$-complete for $1\leq q\leq r-1$ by Theorem \ref{A-G}.
\end{proof}

\subsection{$\partial\bar{\partial}$-lemma}
The \emph{$\partial\bar{\partial}$-lemma} on a compact complex manifold $X$ refers
to that for every pure-type $d$-closed form on $X$, the properties of
$d$-exactness, $\partial$-exactness, $\bar{\partial}$-exactness and
$\partial\bar{\partial}$-exactness are equivalent. The well-known fact is that compact K\"{a}hler manifolds satisfy the $\partial\bar{\partial}$-lemma. The complex manifolds in the Fujiki class  $\mathscr{C}$ satisfy the $\partial\bar{\partial}$-lemma (\cite{DGMS,Va}) and are not K\"{a}hlerian in general.
There also exist many examples which are not in $\mathscr{C}$, see \cite{AK,AK2,ASTT,K}.
As we know (\cite[(5.16)]{DGMS}), $X$ satisfies the $\partial\bar{\partial}$-lemma, if and only if,  the natural map $H_{BC}^{*,*}(X)\rightarrow H_{\bar{\partial}}^{*,*}(X)$ induced by the identity is an isomorphism.

We recall some notations of the double complex and its cohomologies, see \cite[Section 1]{St2} or  \cite{AK, St1, St3}. \emph{A bounded double complex over $\mathbb{C}$ with real structure} refers to the quadruple $(K^{*,*},\partial_1, \partial_2,\sigma)$, where

$(1)$ $K^{*,*}$ is a bigraded $\mathbb{C}$-vector space for all  $(p,q)\in\mathbb{Z}$, which satisfies that $K^{p,q}=0$ except for finitely many $(p,q)\in\mathbb{Z}$,

$(2)$ $\partial_1$ and $\partial_2:K^{*,*}\rightarrow K^{*,*}$ are $\mathbb{C}$-linear maps of bidegree $(1,0)$ and $(0,1)$ such that $\partial_i^2=0$ for $i=1,2$ and $\partial_1\circ\partial_2+\partial_2\circ\partial_1=0$,

$(3)$ $\sigma:K^{*,*}\rightarrow K^{*,*}$ is a  conjugation-antilinear involution, which satisfies that $\sigma (K^{p,q})=K^{q,p}$ and $\sigma\partial_1\sigma=\partial_2$.\\
A \emph{morphism} of bounded double complexes over $\mathbb{C}$ with real structures means a $\mathbb{C}$-linear map of underlying $\mathbb{C}$-vector spaces compatible  with the bigrading, differentials and real structure.

Let $\partial^{p,q}_1:K^{p,q}\rightarrow K^{p+1,q}$ and $\partial^{p,q}_2:K^{p,q}\rightarrow K^{p,q+1}$ be the restrictions of $\partial_1$ and $\partial_2$ respectively. Define the  \emph{Dolbeault cohomology} as $H^{p,q}_{\partial_2}(K)=H^{q}(K^{p,\bullet},\partial_2)$ and the \emph{Bott-Chern cohomology} as $H^{p,q}_{BC}(K)=\frac{\textrm{ker}\partial^{p,q}_1\cap\textrm{ker}\partial^{p,q}_2}{\textrm{im}\partial^{p-1,q}_1\circ\partial^{p-1,q-1}_2}$. A morphism of bounded double complexes over $\mathbb{C}$ with real structures is called an \emph{$E_1$-isomorphism}, if it induces an isomorphism on the Dolbeault cohomology.

\begin{thm}[{\cite[Lemma 4]{St2}\cite[Lemma 1.10]{St1}\cite{AK}}]\label{Stelzig}
Any $E_1$-isomorphism induces  an isomorphism on the Bott-Chern cohomology.
\end{thm}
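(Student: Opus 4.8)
The plan is to avoid manipulating Bott--Chern representatives directly and instead reduce the statement to the structure theory of bounded double complexes. Let $f$ be an $E_1$-isomorphism, i.e. a morphism of bounded double complexes with real structure inducing an isomorphism on $H_{\partial_2}$. The first thing I would do is exploit the real structure to get the conjugate differential for free. Since $\sigma$ is a conjugation-antilinear involution with $\sigma\partial_1\sigma=\partial_2$ and $\sigma(K^{p,q})=K^{q,p}$, it carries the column complex $(K^{p,\bullet},\partial_2)$ onto the row complex $(K^{\bullet,p},\partial_1)$ and hence induces a conjugate-linear isomorphism $H^{p,q}_{\partial_2}(K)\cong H^{q,p}_{\partial_1}(K)$, naturally in $f$. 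As $f$ commutes with $\sigma$, the hypothesis that $f$ is an isomorphism on every $H_{\partial_2}$ forces it to be an isomorphism on every $H_{\partial_1}$ as well.

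Next I would invoke the structure theorem for bounded double complexes (the essential input of \cite{St1,AK}): every such complex is a finite direct sum, unique up to isomorphism, of squares and zigzags, and the resulting category is Krull--Schmidt. Squares contribute nothing to $H_{\partial_1}$, $H_{\partial_2}$ or $H_{BC}$, so they may be discarded. For the zigzags I would record the elementary combinatorics that along a zigzag the arrows alternate between horizontal ($\partial_1$) and vertical ($\partial_2$), so that every interior dot is adjacent to a vertical arrow and contributes to neither $H_{\partial_2}$ nor $H_{\partial_1}$, while each of the two endpoints contributes to $H_{\partial_2}$ exactly when its unique arrow is horizontal and to $H_{\partial_1}$ exactly when it is vertical (an isolated dot contributing to both). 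The decisive consequence is that \emph{no} nontrivial summand is invisible to the pair $(H_{\partial_1},H_{\partial_2})$: since each zigzag is pinned down by the positions and types of its two endpoints, the full collection of dimensions $\dim H^{p,q}_{\partial_1}$ and $\dim H^{p,q}_{\partial_2}$ determines the multiplicity of every zigzag type in the decomposition.

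Combining the two steps, the fact that $f$ is an isomorphism on all $H_{\partial_1}$ and all $H_{\partial_2}$ forces the source and target to have identical zigzag summands. Using the essential uniqueness of the decomposition together with a Fitting/Krull--Schmidt argument, one sees that the pair $(H_{\partial_1},H_{\partial_2})$ is conservative on the zigzag part, so $f$ restricts to an isomorphism there; equivalently, $f$ is invertible up to morphisms that factor through sums of squares. Because any morphism factoring through a square induces the zero map on $H_{BC}$ (squares have vanishing Bott--Chern cohomology), it follows that $f$ induces an isomorphism on $H_{BC}$, and the same bookkeeping yields the analogous statement for Aeppli cohomology.

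The main obstacle is exactly this last passage, from ``isomorphism on $H_{\partial_1}$ and $H_{\partial_2}$'' to ``isomorphism on $H_{BC}$'', and it is where the structure theorem is genuinely indispensable. One cannot argue through a mapping cone: a cone of double complexes can be shifted so as to control either the column cohomology $H_{\partial_2}$ or the row cohomology $H_{\partial_1}$, but not both simultaneously (already the cone of the identity map is a single arrow, a nontrivial zigzag, rather than a sum of squares). Nor can one appeal to homotopy invariance, since $H_{BC}$ is not invariant under homotopies of double complexes. Thus the isomorphism on $H_{BC}$ must be extracted from the combinatorial rigidity of the zigzag decomposition, and the crux is to verify that the $(H_{\partial_1},H_{\partial_2})$-profile pins down not only the zigzag multiplicities but also the action of $f$ on them.
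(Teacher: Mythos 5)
The paper does not actually prove Theorem \ref{Stelzig}: it is imported wholesale from Stelzig \cite[Lemma 4]{St2}, \cite[Lemma 1.10]{St1} and Angella--Kasuya \cite{AK}, so there is no in-paper argument to compare against. Your outline follows exactly the route of those cited sources: use the real structure to upgrade ``isomorphism on $H_{\partial_2}$'' to ``isomorphism on $H_{\partial_1}$ and $H_{\partial_2}$'' (correct: $\sigma$ intertwines $\partial_1$ and $\partial_2$ and every morphism commutes with $\sigma$, giving conjugate-linear isomorphisms $H^{p,q}_{\partial_2}\cong H^{q,p}_{\partial_1}$ natural in $f$), then pass to the square/zigzag decomposition, discard squares, and read off everything from the zigzag part. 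Your accounting of where row and column cohomology live on a zigzag, and your remarks on why cones and homotopy invariance are unavailable, are all accurate.

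The gap is precisely at the step you yourself flag as the crux, and invoking ``Fitting/Krull--Schmidt'' does not close it. Knowing that the $(H_{\partial_1},H_{\partial_2})$-profile determines all zigzag multiplicities shows that the source and target are \emph{abstractly} isomorphic after deleting squares; it does not yet show that the given $f$ induces an isomorphism on $H_{BC}$. The difficulty is that $\mathrm{Hom}$'s between \emph{distinct} zigzag shapes can be nonzero and can act nontrivially on row or column cohomology in some bidegrees (e.g.\ the projection of a length-two horizontal zigzag onto the single dot at its source is an isomorphism on $H_{\partial_2}$ in that bidegree), so the family $(H_{\partial_1},H_{\partial_2})$ is not conservative for formal Schur-type reasons; one must establish a filtration/ordering of zigzag shapes with respect to which the induced maps on row, column, \emph{and} Bott--Chern cohomology are simultaneously block-triangular with the same diagonal blocks, and then run an induction on zigzag length. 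That triangularity argument is the entire content of \cite[Lemma 1.10]{St1} and \cite[Lemma 4]{St2}. As written, your proposal is an accurate reduction of the theorem to that lemma rather than a proof of it; to make it self-contained you would need to carry out the induction, or else cite the conservativity statement explicitly as part of the structure-theory package you are assuming.
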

Now we consider the $\partial\bar{\partial}$-lemma on fiber bundles and blow-ups.
\subsubsection{Fiber bundles}
\begin{thm}\label{L-H1}
Let $\pi:E\rightarrow X$ be a holomorphic fiber bundle  over a connected compact complex manifold $X$. Assume that there exist  $\bar{\partial}$-closed  forms $t_1$, $\ldots$, $t_r$ of pure degrees satisfying the following conditions:

$(1)$ $t_{i+s}=\bar{t}_i$ for $1\leq i\leq s$ and  $t_i=\bar{t}_{i}$ for $2s+1\leq i\leq r$,

$(2)$ the restrictions  of Dolbeault  classes $[t_1]_{\bar{\partial}}$, $\ldots$, $[t_r]_{\bar{\partial}}$ to $E_x$  is a basis of $H^{*,*}(E_x)$ for every $x\in X$. \\
Then $E$ satisfies the $\partial\bar{\partial}$-lemma if and only if $X$ does.
\end{thm}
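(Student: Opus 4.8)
The plan is to reduce the $\partial\bar\partial$-lemma on $E$ to that on $X$ by transporting the Leray-Hirsch isomorphism of Theorem \ref{L-H} to the level of the double complex and invoking Theorem \ref{Stelzig}. Concretely, I would equip the space of global smooth forms $A^{*,*}(E)$ with its natural structure of a bounded (in each total degree) double complex with real structure $(A^{*,*}(E),\partial,\bar\partial,\mathrm{conj})$, and similarly for $X$. The idea is to build an explicit $E_1$-isomorphism relating these, using the forms $t_1,\dots,t_r$. The hypotheses (1) on the $t_i$ are designed precisely so that the span of the $t_i$ is stable under conjugation, so that the Leray-Hirsch map respects the real structure.

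First I would form the auxiliary double complex
\begin{displaymath}
B^{*,*}=\bigoplus_{i=1}^r A^{*-u_i,*-v_i}(X),
\end{displaymath}
where $t_i$ has degree $(u_i,v_i)$, with differentials $\partial_B=\bigoplus\partial$, $\bar\partial_B=\bigoplus\bar\partial$, and a real structure $\sigma_B$ that permutes the summands according to the conjugation pattern in (1): $\sigma_B$ sends the $i$-th summand to the $(i+s)$-th (and back) for $1\le i\le s$, and fixes the summands $2s+1\le i\le r$, composed with ordinary conjugation on $A^{*,*}(X)$. One checks $\sigma_B$ is a conjugation-antilinear involution of bidegree-swapping type. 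Then I would consider the map
\begin{displaymath}
\Psi=\sum_{i=1}^r \pi^*(\bullet)\wedge t_i:B^{*,*}\longrightarrow A^{*,*}(E).
\end{displaymath}
Because each $t_i$ is $\bar\partial$-closed, $\Psi$ commutes with $\bar\partial$; and the conjugation compatibility $\sigma\partial_1\sigma=\partial_2$ together with condition (1) forces $\Psi$ to intertwine $\sigma_B$ with complex conjugation on $E$. Theorem \ref{L-H} says exactly that $\Psi$ induces an isomorphism on Dolbeault ($\bar\partial$-) cohomology, i.e. $\Psi$ is an $E_1$-isomorphism.

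The next step is to relate the $\partial\bar\partial$-lemma on the two sides to their cohomologies. Using the criterion recalled in the text, $E$ (resp. $X$) satisfies the $\partial\bar\partial$-lemma iff the natural map $H_{BC}\to H_{\bar\partial}$ is an isomorphism, equivalently iff $\dim_{\mathbb C} H_{BC}=\dim_{\mathbb C} H_{\bar\partial}$ in each bidegree. By Theorem \ref{Stelzig}, the $E_1$-isomorphism $\Psi$ induces an isomorphism on Bott-Chern cohomology as well as on Dolbeault cohomology. Hence $H_{BC}^{*,*}(E)\cong H_{BC}(B)$ and $H_{\bar\partial}^{*,*}(E)\cong H_{\bar\partial}(B)$ compatibly with the natural maps $H_{BC}\to H_{\bar\partial}$. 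Since $B$ is a direct sum of degree-shifted copies of $A^{*,*}(X)$, its Bott-Chern and Dolbeault cohomologies are the corresponding direct sums of those of $X$, and the natural map on $B$ is simply a direct sum of copies of the natural map for $X$. Therefore $H_{BC}(B)\to H_{\bar\partial}(B)$ is an isomorphism iff $H_{BC}^{*,*}(X)\to H_{\bar\partial}^{*,*}(X)$ is, and the theorem follows.

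The main obstacle I anticipate is bookkeeping the real structure rather than any deep analytic point. One must verify carefully that the permutation-plus-conjugation map $\sigma_B$ genuinely defines a real structure in the sense of condition (3) of the definition, i.e. that it is an involution of bidegree $(q,p)\mapsto(p,q)$ type and satisfies $\sigma_B\partial_B\sigma_B=\bar\partial_B$; this is where hypothesis (1) is used, and the indexing $t_{i+s}=\bar t_i$ must be matched precisely against the bidegree shifts $(u_i,v_i)$ versus $(v_i,u_i)$. A secondary technical point is that Theorem \ref{Stelzig} is stated for bounded double complexes, whereas $A^{*,*}(E)$ is only bounded in each total degree for fixed $(p,q)$; one resolves this by applying the theorem bidegree-by-bidegree, or by passing to the finite-dimensional cohomology, which is legitimate since $X$ and the fibers are compact so all cohomologies in play are finite-dimensional.
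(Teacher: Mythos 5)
Your proposal is correct and follows essentially the same route as the paper: the paper builds the same auxiliary double complex $K^{*,*}=\bigoplus_{i=1}^r A^{*,*}(X)[-u_i,-v_i]$ with the same permutation-plus-conjugation real structure, uses the same map $\sum_i\pi^*(\bullet)\wedge t_i$, invokes Theorem \ref{L-H} to see it is an $E_1$-isomorphism and Theorem \ref{Stelzig} to transfer this to Bott--Chern cohomology, and concludes via the commutative square of natural maps $H_{BC}\to H_{\bar\partial}$. Your closing worry about boundedness is moot, since for a compact (finite-dimensional) manifold $A^{p,q}$ vanishes outside finitely many bidegrees, so the double complexes involved are bounded in the sense required.
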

\begin{proof}
Set $t_i\in A^{u_i,v_i}(E)$ for $1\leq i\leq r$. Clearly, $(u_{s+i},v_{s+i})=(v_i,u_i)$ for $1\leq i\leq s$ and $u_i=v_i$ for $2s+1\leq i\leq r$. Define a bigraded $\mathbb{C}$-vector space
\begin{displaymath}
K^{*,*}=\bigoplus_{i=1}^rA^{*,*}(X)[-u_i,-v_i],
\end{displaymath}
where $[-u_i,-v_i]$ denotes the degree shift for $1\leq i\leq r$. Define two $\mathbb{C}$-linear maps
\begin{displaymath}
\partial_1=\bigoplus_{i=1}^r\partial \mbox{ , } \partial_2=\bigoplus_{i=1}^r\bar{\partial}:K^{*,*}\rightarrow K^{*,*},
\end{displaymath}
where $\partial$ and $\bar{\partial}$ denote the partial differential operators  on $A^{*,*}(X)$. Define $\sigma:K^{*,*}\rightarrow K^{*,*}$ as
\begin{displaymath}
\sigma(\alpha_1,\alpha_2,\ldots,\alpha_r)
=(\bar{\alpha}_{s+1},\bar{\alpha}_{s+2},\ldots,\bar{\alpha}_{2s},\bar{\alpha}_1,\bar{\alpha}_2,\ldots,\bar{\alpha}_s,\bar{\alpha}_{2s+1},\ldots,\bar{\alpha}_r),
\end{displaymath}
where $\alpha_i\in A^{*,*}(X)[-u_i,-v_i]$ for $1\leq i\leq r$. The quadruple $(K^{*,*},\partial_1, \partial_2,\sigma)$ is a bounded double complex over $\mathbb{C}$ with real structure. Define $f:K^{*,*}\rightarrow A^{*,*}(E)$ as
\begin{displaymath}
(\alpha_1,\ldots,\alpha_r)\mapsto\sum_{i=1}^r\pi^*\alpha_i\wedge t_i,
\end{displaymath}
which is a morphism of bounded double complexes over $\mathbb{C}$ with real structures.

Consider the commutative diagram
\begin{displaymath}
\xymatrix{
  \bigoplus\limits_{i=1}^{r-1}H^{*,*}_{BC}(X)[-u_i,-v_i]\ar[d]_{} \qquad \ar[r]^{\qquad\sum\limits_{i=1}^r\pi^*(\bullet)\cup [t_i]_{BC}}  &\quad H^{*,*}_{BC}(E) \ar[d]^{} \\
  \bigoplus\limits_{i=1}^{r-1}H^{*,*}_{\bar{\partial}}(X)[-u_i,-v_i]\qquad \ar[r]^{\qquad\sum\limits_{i=1}^r\pi^*(\bullet)\cup [t_i]_{\bar{\partial}}}  & \quad H^{*,*}_{\bar{\partial}}(E),}
\end{displaymath}
where the vertical maps are induced by identities. By Theorem \ref{L-H}, the bottom map is an isomorphism, i.e., $f$ is an $E_1$-isomorphism. By Theorem \ref{Stelzig}, the top map is isomorphic. We easily get the theorem by the commutative diagram.
\end{proof}

\begin{rem}
In the above proof, we get the Leray-Hirsch theorem of Bott-Chern cohomology, whose conditions are stronger than those of Dolbeault cohomology.
For instance, let $X$ be any complex manifold and $H_\alpha$ the Hopf surface (see Remark \ref{Hopf}).
View $X\times H_\alpha$ as a holomorphic fiber bundle over  $X$ with fiber $H_\alpha$.
On the one hand, the conditions in  Theorem \ref{L-H1} imply  symmetries of the Hodge numbers of fibers, which are not satisfied by $H_\alpha$ (\cite[V, Proposition (18.1)]{BHPV}).
On the other hand, suppose that $h_1$,  $h_2$, $h_3$,  $h_4$ is a basis of $H_{\bar{\partial}}^{*,*}(H_\alpha)$, then $pr_2^*h_1$, $pr_2^*h_2$, $pr_2^*h_3$, $pr_2^*h_4$ satisfy the condition in  Theorem \ref{L-H}, where  $pr_2$ is the projection from $X\times H_\alpha$ onto  $H_\alpha$.
\end{rem}

The $\partial\bar{\partial}$-lemma on projective bundles was studied  in \cite{ASTT,M5,M6}. We generalize it as follows.

\begin{cor}\label{flagbun}
Let $F$ be a flag bundle associated to a holomorphic vector bundle on  a connected compact  complex manifold $X$. Then $F$ satisfies the $\partial\bar{\partial}$-lemma, if and only if, $X$ does.
\end{cor}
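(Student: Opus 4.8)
The plan is to reduce the statement about a flag bundle $F \to X$ to the projective-bundle case, Theorem~\ref{L-H1}, by realizing the flag bundle as an iterated tower of projective bundles. Recall that a (full or partial) flag bundle associated to a holomorphic vector bundle $V$ on $X$ can be constructed as a sequence of projectivizations: set $F_0 = X$ and inductively let $F_{k+1} = \mathbb{P}(V_k) \to F_k$ be the projectivization of a tautologically-defined holomorphic vector bundle $V_k$ on $F_k$, so that after finitely many steps one obtains $F_m = F$ together with projections $\rho_k : F_{k+1} \to F_k$, each of which is a projective bundle with fiber a complex projective space $\mathbb{C}P^{n_k}$.

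First I would verify that each intermediate stage $F_k$ is a connected compact complex manifold, which is immediate since $X$ is connected and compact and projectivizations of holomorphic vector bundles over connected compact complex manifolds are again connected and compact. Next, for the single-step projective bundle $\rho_k : F_{k+1} = \mathbb{P}(V_k) \to F_k$, I would apply Theorem~\ref{L-H1} with the classes $1, h_k, h_k^2, \dots, h_k^{n_k}$, where $h_k \in H^{1,1}_{\bar\partial}(F_{k+1})$ is the Dolbeault class of a first Chern form of the universal line bundle $\mathcal{O}_{\mathbb{P}(V_k)}(-1)$. These classes restrict to a basis of $H^{*,*}_{\bar\partial}$ of each fiber $\mathbb{C}P^{n_k}$ (this is exactly the input used in Corollary~\ref{poj-D}), so condition~$(2)$ of Theorem~\ref{L-H1} holds. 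Condition~$(1)$ on the real structure also holds because each $h_k$ can be chosen to be the class of a real $(1,1)$-form $\tfrac{i}{2\pi}\Theta(\mathcal{O}(-1))$, so each $t_i = h_k^{i}$ satisfies $\bar t_i = t_i$ (these are the self-conjugate generators of the second type in the hypothesis, with $s = 0$). Therefore Theorem~\ref{L-H1} applies and yields: $F_{k+1}$ satisfies the $\partial\bar\partial$-lemma if and only if $F_k$ does.

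Finally I would chain these equivalences together. Composing the finitely many biconditionals
\begin{displaymath}
F = F_m \text{ satisfies } \partial\bar\partial \iff F_{m-1} \text{ does} \iff \cdots \iff F_0 = X \text{ does}
\end{displaymath}
gives that $F$ satisfies the $\partial\bar\partial$-lemma if and only if $X$ does, which is the assertion of the corollary.

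The main obstacle, and the only genuinely non-routine point, is the very first step: setting up the iterated-projective-bundle description of the flag bundle with enough care that each stage $F_{k+1} \to F_k$ is genuinely the projectivization of a \emph{holomorphic} vector bundle $V_k$ on the connected compact base $F_k$, so that Theorem~\ref{L-H1} (via Corollary~\ref{poj-D}) is literally applicable at each stage. Once this tower is in place, the verification of the two hypotheses of Theorem~\ref{L-H1} at each step is standard (it is the content already recorded in Corollary~\ref{poj-D}), and the induction is purely formal. I expect the compactness and connectedness bookkeeping across the tower to be the part most worth stating explicitly, since Theorem~\ref{L-H1} requires a compact connected base at every application.
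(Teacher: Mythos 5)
Your reduction works only for \emph{full} flag bundles, but the corollary (and the paper's proof) covers general partial flag bundles $Fl(n_1,\ldots,n_r)(E)$ with arbitrary $n_i\geq 1$, and there the first step of your argument breaks down. The natural tower you describe, obtained by forgetting one subspace of the flag at a time, has steps that are \emph{Grassmannian} bundles rather than projective bundles as soon as some $n_i\geq 2$; and a Grassmannian $Gr(k,m)$ with $1<k<m-1$ is not an iterated projective bundle in any way compatible with your use of Theorem \ref{L-H1}. For instance $Gr(2,4)$ has even Betti numbers $1,1,2,1,1$, and $1+t^2+2t^4+t^6+t^8$ does not factor as a product of polynomials of the form $1+t^2+\cdots+t^{2n_k}$. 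So at such a step there is no line bundle whose powers $1,h_k,\ldots,h_k^{n_k}$ restrict to a basis of the fiber cohomology, and Theorem \ref{L-H1} cannot be invoked in the form you use it. This is exactly the point you flagged as the only non-routine one, and it is where the argument fails.

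Two repairs are available. The paper's own proof avoids towers entirely: it applies Theorem \ref{L-H1} \emph{once} to $F\to X$ as a fiber bundle with partial-flag-manifold fiber, taking for the $t_i$ real $\bar{\partial}$-closed $(j,j)$-forms given by monomials in Chern forms of the successive universal quotient bundles $E^{(i)}=E_i/E_{i-1}$ on $F$; the needed input is the classical fact that the corresponding monomials in the Chern classes of the successive universal quotients give a vector-space basis of $H^{*,*}_{\bar{\partial}}$ of the flag manifold fiber. Alternatively, your tower idea can be salvaged by passing to the full flag bundle $\widetilde{F}$ of $E$: it is an iterated projective bundle over $X$ \emph{and} over $F$ (over $F$ it is the fiber product of the full flag bundles of the $E^{(i)}$), so two applications of your chain of biconditionals give that $X$, $\widetilde{F}$ and $F$ all satisfy the $\partial\bar{\partial}$-lemma simultaneously. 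For full flag bundles your argument is correct as written: the real $(i,i)$-classes $1,h_k,\ldots,h_k^{n_k}$ do satisfy both hypotheses of Theorem \ref{L-H1} with $s=0$, and the compactness and connectedness of each stage is as routine as you say.
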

\begin{proof}
Suppose that $E$ is a holomorphic vector bundle with rank $n$ over $X$ and $n_1$, $\ldots$, $n_r$ is a sequence of positive integers with $\sum\limits_{i=1}^rn_i=n$, such that the fiber $F_x$ of $F$ over $x\in X$ is the flag variety $Fl(n_1,\ldots,n_r)(E_x)=$  
\begin{displaymath}
\begin{aligned}
\{(V_0,V_1,\ldots,V_{r-1},V_r)|&0=V_0\subsetneq V_1\subsetneq V_2\subsetneq \ldots \subsetneq V_{r-1}\subsetneq V_r=E_x, \mbox{ where } V_i \mbox{ is a } \\
& \mbox{ complex vector space with dimension } \sum\limits_{j=1}^{i}n_j \mbox{  for } 1\leq i\leq r\}.
\end{aligned}
\end{displaymath}
For $0\leq i\leq r$, let $E_i$ be the  universal subbundle over $F$ whose fiber over the point $(V_0,V_1,\ldots,V_{r-1},V_r)$ is $V_i$.
Notice that $E_0=0$ and $E_r=\pi^*E$, where $\pi$ is the projection from $F$ onto $X$.
Denote by $E^{(i)}=E_i/E_{i-1}$ the successive  universal quotient bundles  and by $t_j^{(i)}\in A^{j,j}(F)$ a $j$-th Chern form of $E^{(i)}$ for $1\leq i\leq r$.
For any $x\in X$, the restrictions $t_{j_i}^{(i)}|_{F_x}$ ($1\leq i\leq r$, $1\leq j_i\leq n_i$) to $F_x$ are  Chern forms of successive  universal quotient bundles of the flag manifold $F_x$.
As we know, there exists the monomials $Q_i(T_1^{(1)}, \ldots,  T_{n_1}^{(1)}, \ldots, T_1^{(r)}, \ldots, T_{n_r}^{(r)})$ for $1\leq i\leq k$ such that
\begin{displaymath}
Q_i([t_{1}^{(1)}|_{F_x}]_{\bar{\partial}}, \ldots,  [t_{n_1}^{(1)}|_{F_x}]_{\bar{\partial}}, \ldots, [t_1^{(r)}|_{F_x}]_{\bar{\partial}}, \ldots, [t_{n_r}^{(r)}|_{F_x}]_{\bar{\partial}}), \mbox{  } 1\leq i\leq k,
\end{displaymath}
is a basis of the Dolbeault cohomology $H_{\bar{\partial}}^{*,*}(F_x)$ of the flag manifold $F_x$.
Denote $t_i=Q_i(t_1^{(1)}, \ldots,  t_{n_1}^{(1)}, \ldots, t_1^{(r)}, \ldots, t_{n_r}^{(r)})$ for $1\leq i\leq k$.
Clearly,  $t_i$ are all real and $\bar{\partial}$-closed on $F$. Then $t_1$, $\ldots$, $t_k$ satisfy the conditions in Theorem \ref{L-H1}.
Thus we get the corollary.
\end{proof}

We proved the following corollary in \cite{M6} by  the Deligne-Griffiths-Morgan-Sullivan criterion \cite[(5.21)]{DGMS}   and  give a new proof with a  viewpoint of fiber bundles now.
\begin{cor}[{\cite[Theorem 1.5]{M6}}]\label{product}
Let $X$ and $Y$ be connected compact complex manifolds. Then  $X\times Y$ satisfies the $\partial\bar{\partial}$-lemma if and only if $X$ and $Y$ both do.
\end{cor}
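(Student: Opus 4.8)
The plan is to realize $X \times Y$ as a holomorphic fiber bundle and apply Theorem \ref{L-H1}. Specifically, view the projection $pr_1: X \times Y \to X$ as a (trivial) holomorphic fiber bundle with fiber $Y$. To invoke Theorem \ref{L-H1}, I need $\bar{\partial}$-closed forms $t_1, \ldots, t_r$ of pure degree on $X \times Y$ whose restrictions to each fiber $\{x\} \times Y \cong Y$ form a basis of $H^{*,*}_{\bar{\partial}}(Y)$, and which satisfy the reality/conjugation symmetry condition (1) of that theorem. Since the bundle is a genuine product, the natural candidates are the pullbacks $pr_2^* s_k$ of representatives $s_k$ of a basis of $H^{*,*}_{\bar{\partial}}(Y)$ under the second projection $pr_2: X \times Y \to Y$.

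First I would choose a basis of $H^{*,*}_{\bar{\partial}}(Y)$ built from $\bar{\partial}$-closed pure-degree forms, arranged to respect complex conjugation: that is, a basis $\{s_1, \ldots, s_r\}$ where conjugation permutes the classes, so that after pulling back by $pr_2$ (which commutes with conjugation since $pr_2$ is holomorphic) the condition $t_{i+s} = \bar{t}_i$ for $1 \le i \le s$ and $t_i = \bar{t}_i$ for $2s+1 \le i \le r$ of Theorem \ref{L-H1} holds. Concretely, one can pick real representatives for the conjugation-invariant classes and conjugate pairs for the rest; this is possible because conjugation is an involution on $H^{*,*}_{\bar{\partial}}(Y)$ sending $H^{p,q}$ to $H^{q,p}$. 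Second, the restriction of $pr_2^* s_k$ to a fiber $\{x\} \times Y$ is exactly $s_k$ under the identification $\{x\} \times Y \cong Y$, so condition (2) of Theorem \ref{L-H1} is immediate. With both hypotheses verified, Theorem \ref{L-H1} applied to the bundle $pr_1: X \times Y \to X$ yields that $X \times Y$ satisfies the $\partial\bar{\partial}$-lemma if and only if $X$ does.

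This gives one direction of the equivalence together with the role of $X$; to complete the ``if and only if'' with respect to $Y$, I would simply exchange the roles of the two factors, viewing $X \times Y$ instead as a fiber bundle over $Y$ with fiber $X$ via $pr_2$, and run the identical argument. Combining the two, $X \times Y$ satisfies the $\partial\bar{\partial}$-lemma if and only if $X$ does and if and only if $Y$ does; taking these together shows $X \times Y$ satisfies the $\partial\bar{\partial}$-lemma precisely when both $X$ and $Y$ do.

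The main obstacle I anticipate is purely the bookkeeping in verifying the conjugation-symmetry condition (1) of Theorem \ref{L-H1}: one must arrange the chosen basis $\{s_k\}$ and its pullbacks so that conjugation acts by the specific index permutation demanded by that theorem (swapping the first $s$ entries with the next $s$, fixing the remainder). This is not a deep point, since conjugation is an order-two involution on the finite-dimensional space $H^{*,*}_{\bar{\partial}}(Y)$ and such a basis always exists, but it requires care in selecting real versus conjugate-paired representatives. Everything else—$\bar{\partial}$-closedness and pure degree of the pullbacks, and the fiberwise basis property—is automatic from the product structure and the holomorphy of the projections, so no genuinely hard analytic or cohomological input beyond Theorems \ref{L-H1} and \ref{Stelzig} is needed.
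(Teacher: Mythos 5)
Your overall strategy (realize $X\times Y$ as a trivial fiber bundle over each factor and invoke Theorem \ref{L-H1}) is the same as the paper's, but there is a genuine gap at the crucial step, and it breaks both directions of your argument. You assert that a basis of $H^{*,*}_{\bar{\partial}}(Y)$ can always be represented by $\bar{\partial}$-closed forms arranged into conjugate pairs, ``because conjugation is an involution on $H^{*,*}_{\bar{\partial}}(Y)$.'' It is not: if $t$ is a $\bar{\partial}$-closed $(p,q)$-form, then $\bar{t}$ is a $\partial$-closed $(q,p)$-form, which need not be $\bar{\partial}$-closed, so conjugation does not act on Dolbeault cohomology of a general compact complex manifold. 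Indeed condition $(1)$ of Theorem \ref{L-H1} forces $h^{p,q}=h^{q,p}$ for the fiber, which already fails for the Hopf surface (see Remark \ref{Hopf} and the remark following Theorem \ref{L-H1}). A quick sanity check exposes the problem: if the hypotheses of Theorem \ref{L-H1} could be verified unconditionally for both projections, your ``combine the two iffs'' step would prove that \emph{any} two compact complex manifolds $X$ and $Y$ satisfy the $\partial\bar{\partial}$-lemma simultaneously, which is absurd (take $X$ K\"ahler and $Y$ a Hopf surface).

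The paper's proof supplies exactly the missing input, and does so asymmetrically in the two directions. For the ``if'' direction, one first uses the hypothesis that $Y$ satisfies the $\partial\bar{\partial}$-lemma: the isomorphism $H^{*,*}_{BC}(Y)\to H^{*,*}_{\bar{\partial}}(Y)$ lets one choose representatives that are both $\partial$- and $\bar{\partial}$-closed, on which conjugation genuinely acts, and these can then be arranged into the required conjugate pairs before pulling back by $pr_2$. For the ``only if'' direction one cannot choose forms on the fiber $Y$ at all (nothing is assumed about $Y$); instead the paper chooses conjugation-symmetric $d$-closed representatives on $X\times Y$ itself, using the $\partial\bar{\partial}$-lemma on the total space, restricts them to a fiber $\{x_0\}\times Y$, extracts via the K\"unneth formula (\ref{Kun}) a subfamily whose restrictions form a basis of $H^{*,*}_{\bar{\partial}}(\{x\}\times Y)$ for every $x$, and only then applies Theorem \ref{L-H1}. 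Your proposal has no mechanism for producing the fiberwise basis in this direction, so the ``only if'' half does not go through as written.
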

\begin{proof}
Denote by $pr_1$ and $pr_2$ the projections from $X\times Y$ onto $X$ and $Y$ respectively.

Let $X$ and $Y$ both satisfy the $\partial\bar{\partial}$-lemma.  The identity induces an isomorpism $H^{*,*}_{BC}(Y)\rightarrow H^{*,*}_{\bar{\partial}}(Y)$, hence we can choose $\bar{\partial}$-closed forms $h_1$, $\ldots$, $h_r\in A^{*,*}(Y)$ of pure degrees such that $h_{i+s}=\bar{h}_i$ for $1\leq i\leq s$,  $h_i=\bar{h}_{i}$ for $2s+1\leq i\leq r$ and the Dolbeault classes $[h_1]_{\bar{\partial}}$, $\ldots$, $[h_r]_{\bar{\partial}}$ is a basis of $H^{*,*}_{\bar{\partial}}(Y)$. Then $t_i=pr_2^*h_i$ for $1\leq i\leq r$ satisfy the conditions in Theorem \ref{L-H1}, where we view $pr_1:X\times Y\rightarrow X$ as a holomorphic fiber bundle over $X$.
Thus $X\times Y$ satisfies the $\partial\bar{\partial}$-lemma.

Let $X\times Y$ satisfy the $\partial\bar{\partial}$-lemma. We only prove that $X$ satisfies the $\partial\bar{\partial}$-lemma. Since the identity induced an isomorphism $H^{*,*}_{BC}(X\times Y)\tilde{\rightarrow} H^{*,*}_{\bar{\partial}}(X\times Y)$, there exists  $\bar{\partial}$-closed forms  $t_1$, $\ldots$,  $t_r$ of pure degrees on $X\times Y$ satisfying that

$(1)$ $t_i\in\bigoplus\limits_{ p>q}A^{p,q}(X\times Y)$ and $t_{i+s}=\bar{t}_i$ for $1\leq i\leq s$,

$(2)$ $t_{i}\in\bigoplus\limits_{p=q}A^{p,q}(X\times Y)$ and $t_{i}=\bar{t}_i$ for $2s+1\leq i\leq r$,

$(3)$ $[t_1]_{\bar{\partial}}$, $\ldots$,  $[t_r]_{\bar{\partial}}$ is a basis of $H^{*,*}_{\bar{\partial}}(X\times Y)$.\\
For $x\in X$, denote by $i_x:\{x\}\times Y\rightarrow  X\times Y$ the inclusion. Then $pr_1\circ i_x$ is constant and $pr_2\circ i_x:\{x\}\times Y\rightarrow Y$ is the identity.
By the K\"{u}nneth formula (\ref{Kun}), $H^{*,*}_{\bar{\partial}}(\{x\}\times Y)=i_x^*H^{*,*}_{\bar{\partial}}(X\times Y)$, so $[i^*_{x}t_1]_{\bar{\partial}}$, $\ldots$,  $[i^*_{x}t_r]_{\bar{\partial}}$ linearly generate $H^{*,*}_{\bar{\partial}}(\{x\}\times Y)$.
Fix a point $x_0\in X$, we may assume that $[i^*_{x_0}t_1]_{\bar{\partial}}$, $\ldots$,  $[i^*_{x_0}t_{s_0}]_{\bar{\partial}}$ and $[i^*_{x_0}t_{2s+1}]_{\bar{\partial}}$, $\ldots$, $[i^*_{x_0}t_{r_0}]_{\bar{\partial}}$ are maximally linearly independent in $[i^*_{x_0}t_1]_{\bar{\partial}}$, $\ldots$,  $[i^*_{x_0}t_{s}]_{\bar{\partial}}$ and $[i^*_{x_0}t_{2s+1}]_{\bar{\partial}}$, $\ldots$, $[i^*_{x_0}t_{r}]_{\bar{\partial}}$ respectively. Then $[i^*_{x_0}t_1]_{\bar{\partial}}$, $\ldots$, $[i^*_{x_0}t_{s_0}]_{\bar{\partial}}$, $[i^*_{x_0}t_{s+1}]_{\bar{\partial}}$, $\ldots$, $[i^*_{x_0}t_{s+s_0}]_{\bar{\partial}}$, $[i^*_{x_0}t_{2s+1}]_{\bar{\partial}}$, $\ldots$, $[i^*_{x_0}t_{r_0}]_{\bar{\partial}}$ is a basis of $H^{*,*}_{\bar{\partial}}(\{x_0\}\times Y)$. For any $x\in X$, $[i^*_{x}t_i]_{\bar{\partial}}=id_x^*[i^*_{x_0}t_i]_{\bar{\partial}}$, where $id_x:\{x\}\times Y\rightarrow \{x_0\}\times Y$ induced by the identity. So $t_1$, $\ldots$, $t_{s_0}$, $t_{s+1}$, $\ldots$, $t_{s+s_0}$, $t_{2s+1}$, $\ldots$, $t_{r_0}$ satisfy the conditions in Theorem \ref{L-H1}, where $pr_1:X\times Y\rightarrow X$ is viewed as a holomorphic fiber bundle over $X$.
Thus $X$ satisfies the $\partial\bar{\partial}$-lemma.
\end{proof}

\subsubsection{Blow-ups}
Let $\pi:\widetilde{X}_Z\rightarrow X$ be the blow-up of a connected complex manifold $X$ along a connected complex submanifold $Z$ of codimension $\geq 2$. Combining Theorems  \ref{main} and \ref{Stelzig}, we can get a similar blow-up formula for Bott-Chern cohomology, which can be used to  prove that the $\partial\bar{\partial}$-lemma holds on $\widetilde{X}_Z$ if and only if it holds on $X$ and $Z$, refer to \cite[Section 4]{M6}.  See also \cite{YY,ASTT,St1,M5} and \cite[Section 3]{M6} for other proofs.


\end{document}